\documentclass{article}

\usepackage{amsmath,amsthm,amssymb,amsfonts}
\usepackage{graphicx,color}


\newcommand{\R}{\mathbb{R}}
\newcommand{\RR}{\mathbb{R}}
\newcommand{\N}{\mathbb{N}}
\newcommand{\un}{\mathbf{1}}
\newcommand{\eps}{\varepsilon}

\newcommand{\dis}{\mathcal{D}}


\newcommand{\ha}{H^{\frac12} (\Omega) }

\newcommand{\intom}{\int_\Omega}
\newcommand{\vphi}{\varphi}
\newcommand{\na}{\nabla}
\newcommand{\pa}{\partial}

\def\F{\mathcal F}
\def\E{\mathcal E}


\newtheorem{proposition}{Proposition}
\newtheorem{lemma}{Lemma}
\newtheorem{corollary}{Corollary}
\newtheorem{theorem}{Theorem}

\theoremstyle{definition}

\theoremstyle{remark}
\newtheorem{remark}{Remark}


\author{C. Imbert\footnote{CEREMADE, Universit\'e Paris-Dauphine, UMR
    CNRS 7534, place de Lattre de Tassigny, 75775 Paris cedex 16,
    France}~ and A. Mellet\footnote{Department of
    Mathematics. Mathematics Building. University of Maryland. College
    Park, MD 20742-4015, USAä}}

\title{Electrified thin films: Global existence of non-negative solutions}

\begin{document}

\maketitle

\begin{abstract} 
  We consider an equation modeling the evolution of a viscous liquid
  thin film wetting a horizontal solid substrate destabilized by an
  electric field normal to the substrate.  The effects of the electric
  field are modeled by a lower order non-local term.  We introduce the
  good functional analysis framework to study this equation on a
  bounded domain and prove the existence of weak solutions defined
  globally in time for general initial data (with finite energy).
\end{abstract}

\paragraph{Keywords:}  Higher order equation,
Non-local equation, Thin film equation, Non-negative solutions

\paragraph{MSC:} 35G25, 35K25, 35A01, 35B09

\section{Introduction}

In this paper, we construct solutions for a thin film type equation
with a destabilizing singular integral term. This term models the
effects of an electric field (see \cite{tp07}). From the analytical
point of view, this paper belongs to the large body of literature
devoted to the thin film equation with destabilizing terms such as
long-wave unstable thin film problems \cite{bp98,bp00,bbw} or the
Kuramato-Sivashinsky equation in combustion and solidification
\cite{fs88,frankel}.

More precisely, we are considering the following equation, which is
{introduced} by Tseluiko and Papageorgiou in
\cite{tp07} (see also \cite{pp05}):
\begin{equation}\label{eq:pp}
\begin{array}{l}
u_t+\big(u^3( c u_{xx} -\alpha u- \lambda I(u))_x\big)_x = 0\qquad x\in[0,L], \quad t>0
\end{array}
\end{equation} 
(in \cite{tp07}, (\ref{eq:pp}) is supplemented with periodic boundary
conditions).  This equation models the evolution of a liquid thin film
(of height $u$) wetting a horizontal solid substrate which is subject
to a gravity field and an electric field normal to the substrate.  The
term $\lambda I(u) $ models the effects of the electric field on the
thin film. The operator $I(u)$ is a nonlocal elliptic operator of
order $1$ which will be defined precisely later on (for now, we can
think of it as being the half-Laplace operator:
$I(u)=-(-\Delta)^{1/2}u$).  When $\lambda>0$, it has a destabilizing
effect (it has the "wrong" sign).  The term $\alpha u$ accounts for
the effects of gravity, and it is also destabilizing when $\alpha <0$
("hanging film").  In \cite{tp07}, it is proved that despite these
destabilizing terms, positive smooth solutions of (\ref{eq:pp}) do not
blow up and remain bounded in $H^1$ for all time.

As mentioned above, there are many papers devoted to the study of thin film
equations with destabilizing terms. In its simplest form,  the thin film
equation reads:
\begin{equation}\label{eq:tf}
u_t + (f(u) u_{xxx})_x = 0,
\end{equation}
The existence of non-negative weak solutions for (\ref{eq:tf}) was
first established by F.~Bernis and A.~Friedman \cite{bf90} for
$f(u)=u^n$, $n> 1$.  Further results (existence for $n>0$ and further
regularity results) were later obtained, by similar technics, in
particular by E. Beretta, M. Bertsch and R. Dal Passo \cite{BBD} and
A. Bertozzi and M. Pugh \cite{BP1,BP2}.  Results in higher dimension
were obtained in particular by Gr\"un in \cite{Grun01,Grun95,DGG98}.
The thin film equation with lower order destabilizing terms has also
received a lot of interest. In particular in \cite{bp98,bp00}, the
following equation
\begin{equation}\label{eq:long-wave}
u_t + (f(u) u_{xxx} - g(u) u_{x})_x =0
\end{equation}
is considered.  Such a destabilizing term (which, unlike that of
(\ref{eq:pp}), is a local term of order $2$) models, for instance, the
effects of gravity for a hanging thin film, or van der Waals type
interactions with the solid substrate.  In \cite{bp98}, the
nonlinearities $f(u)=u^n$ and $g(u)=u^m$ are considered and it is
proved (among other things) that there is no blow-up for $m < n
+2$. In \cite{bp00}, for $f(u)=u$ and $g(u)= u^m$, it is proved that
there is blow-up for $m \ge 3$ and initial data in $H^1(\R)$ with
negative ``energy''. The reader is referred to \cite{bp00} for a
precise statement.

In our equation (\ref{eq:pp}), the nonlinearities in front of the
stabilizing and destabilizing terms are the same ($f(u)=g(u)=u^3$),
but the destabilizing term is elliptic of order $3$ and is nonlocal in
space.  It is known (see \cite{tp07}) that positive smooth solutions
of (\ref{eq:pp}) do not blow up. The goal of the present paper is to
prove the existence of global in time weak solutions for
(\ref{eq:pp}).  
\vspace{7pt}

Note that besides the existence of solutions, many important
properties of the thin film equation (\ref{eq:tf}) have been
investigated (finite speed of propagation of the support, waiting time
phenomenon, existence of source-type solutions \textit{etc.}).  The
key tools in many of these studies are various delicate integral
inequalities (in particular the so-called $\alpha$-entropy
inequalities and local entropy and energy inequalities. See
\cite{bf90,bbp95,BP2,bp98}).  It is not clear that similar functional
inequalities holds for (\ref{eq:pp}).  One reason is that the algebra
involving the operator $I(u)$ is considerably more difficult than that
of the Laplace operator. Another reason, is the obvious difficulty in
deriving local estimates (due to the nonlocal nature of the operator
$I(u)$). For that reason, we only address the existence issue in this
paper.  
\vspace{7pt}

As in \cite{bp98,bp00}, the main difficulty in 
  proving the existence of solutions for (\ref{eq:pp}) comes from the
fact that the energy (see (\ref{eq:energy00}) below) can take negative
values.  In order to obtain $H^1$ a priori estimate, one thus has to
use the conservation of mass which, for non-negative solutions, gives
a global in time $L^1$ bound for the solution (see Lemma
\ref{lem:h1}).

With such an estimate in hand, the existence of global in time
solutions should follow from the construction of approximated
solutions satisfying the right functional inequalities.  Typically,
one needs to regularize the mobility coefficient $u^3$. One way to
proceed is to replace the coefficient $u^3$ with $u^3+\eps$ so that
the equation becomes strictly parabolic. However, for such a
regularized equation one cannot show the existence of non-negative
solutions (the maximum principle does not hold for fourth order
parabolic equations) and Lemma~\ref{lem:h1} is of no use.  An
alternative regularization is to replace the mobility coefficient
$u^3$ with a function $f_\eps (u)$ which satisfies in particular
$f_\eps (u) \sim u^4/\eps$. For such (more degenerate) mobility
coefficient, solutions are expected to be strictly positive and
therefore smooth.  This second regularization procedure was first
suggested by Bernis and Friedman \cite{bf90} and is used in particular
by Bertozzi and Pugh \cite{BP1,bp98}. However, the local in time
existence for such a degenerate equation is not clear to us, since the
corresponding proofs in \cite{bf90} rely on Schauder estimates which
are not classical (and perhaps tedious) with our non-local singular
term $I(u)$. For this reason, we choose the first regularization
approach; this requires us to pay attention to the lack of positivity
of the approximated solutions.  In particular, the $L^1$ norm is not
controlled, and the $H^1$ norm will be controlled by combining the
energy inequality with the entropy inequality.  The idea of combining
the energy together with the entropy in order to get a Lyapunov
functional appeared previously in \cite{st04} where a thin film
equation with a nonlinear drift term is thouroughly studied in all
dimensions.

The main contribution of this paper is thus to introduce the precise
functional analysis framework to be used to treat the term $I(u)$ and
to provide a method for constructing weak solutions satisfying the
proper a priori estimates.
\vspace{5pt}

Rather than working in the periodic setting, we will consider equation
(\ref{eq:pp}) on a bounded domain with Neumann boundary conditions
(these Neumann conditions can be interpreted as the usual contact
angle conditions and seem physically more relevant - the periodic
framework could be treated as well with minor modifications).  Further
details about the derivation of (\ref{eq:pp}) will be given in Section
\ref{sec:phys}.  Since the gravity term is of lower order than the
electric field term, it is of limited interest in the mathematical
theory developed in this paper.  We will thus take
$$ 
\alpha =0 \quad \mbox{ and } \quad c=\lambda =1.
$$
We thus consider the following problem:
\begin{equation}\label{eq:0}
\left\{
\begin{array}{ll}
u_t+\big (f(u) ( u_{xx}- I(u))_x\big)_x = 0 & \mbox{ for } x\in\Omega,\quad t>0\\[3pt]
u_x=0 ,\; f(u) (u_{xx}-I(u))_x=0  & \mbox{ for } x\in\pa\Omega,\quad t>0\\[3pt]
u(x,0)=u_0(x)& \mbox{ for } x\in\Omega.
\end{array}
\right.
\end{equation} 
The domain $\Omega$ is a bounded interval in $\RR$; in the sequel, we will
always take $\Omega=(0,1)$.  The mobility coefficient $f(u)$ is a $\mathcal
C^1$ function $f:[0,+\infty)\rightarrow (0,+\infty)$ satisfying
\begin{equation}\label{eq:f}
f(u)\sim u^n \quad \mbox{ as $u\to 0$}
\end{equation}
for some $n >1$. The operator $I$ is a non-local elliptic operator of
order $1$ which will be defined precisely in Section \ref{sec:pre} as
the square root of the Laplace operator with Neumann boundary
conditions (we have to be very careful with the definition of $I$ in a
bounded domain).

\paragraph{A priori estimates.}
As for the thin film equation (\ref{eq:tf}), we prove the existence of
solutions for (\ref{eq:0}) using a regularization/stability argument.
The main tools are integral inequalities which provide the necessary
compactness.  Besides the conservation of mass, we will see that the
solution $u$ of \eqref{eq:0} satisfies two important integral
inequalities: We define the energy $\mathcal{E} (u)$ and the entropy
$e(u)$ by
$$
\mathcal{E} (u) (t) = \frac12 \int_\Omega (u_x^2(t) + u(t) I u(t)) dx
\quad \text{and} \quad e (u) (t) = \frac12\int_\Omega G(u(t)) dx
$$
where $G$ is a non-negative convex function such that $f G''=1$. 
Classical solutions of (\ref{eq:0}) then satisfy:
\begin{eqnarray}
\label{eq:nrj-intro}
\mathcal{E}(u) (t) + \int_0^t\int_\Omega f(u) \big[(u_{xx} - I(u))_x \big]^2 \,dx\,  ds
\le \mathcal{E} (u_0), \\
\label{eq:ent-intro}
e(u) (t) +  \int_0^t \int (u_{xx})^2 \, dx\, ds + \int_0^t \int u_x  I
(u)_x \,dx \, ds\le e(u_0). 
\end{eqnarray}
Similar inequalities hold for the thin film equation (\ref{eq:tf}).
However, we see here the destabilizing effect of the nonlocal term $I(u)$:
First, we note that as in \cite{bp98,bp00} the energy $\E(u)$ can be written as the difference of
two non-negative quantities:
\begin{equation}\label{eq:energy00}
\mathcal{E} (u) (t) = \|u(t)\|^2_{\dot{H}^1(\Omega)}-\|u (t)\|^2_{\dot{H}^{\frac12}(\Omega)},
\end{equation}
and may thus take negative values.  Similarly, the entropy dissipation can
be written as
$$ 
\int_0^t \int (u_{xx})^2 \, dx\, ds + \int_0^t \int u_x  I (u)_x \,dx \,ds 
=  \|u (t)\|^2_{\dot{H}^2(\Omega)} - \|u(t)\|^2_{\dot{H}^{\frac32}_N(\Omega)},
$$
so the entropy may not be decreasing.

Nevertheless, it is reasonable to expect (\ref{eq:0}) to have
solutions that exist for all times.  Indeed, as shown in \cite{tp07},
the conservation of mass, the inequality (\ref{eq:nrj-intro}) and the
following functional inequality (see Lemma~\ref{lem:h1})
$$
\| u \|^2_{\dot{H}^1 (\Omega)} \le \alpha \mathcal{E} (u) + \beta
\|u\|^2_{L^1 (\Omega)}, \quad \forall u\in H^1(\Omega),
$$
implies that non-negative solution remains bounded in
$L^\infty(0,T;H^1(\Omega))$ for all time $T$.

Furthermore, the interpolation inequality
\begin{eqnarray*}
  \|u(t)\|^2_{\dot{H}^{\frac32}_N} &\le& C  \|u(t)\|_{\dot{H}^{1}} \|u(t)\|_{\dot{H}^{2}} \\
  & \le & \frac12 \|u(t)\|^2_{\dot{H}^{2}} + \frac C2 \|u(t)\|^2_{\dot{H}^{1}}
\end{eqnarray*}
yields
$$
e (u) (t) + \frac12 \int_0^t \|u (r)\|^2_{\dot{H}^2}\, dr \le 
e(u_0) + \frac12 \int_0^t \|u(r)\|^2_{\dot{H}^{1}} \, dr,
$$
and so the entropy remains bounded for all time as well.

\paragraph{Main results.}
We now state the two main results proved in this paper. They should be
compared with Theorems~3.1 and 4.2 in \cite[pp.185\&194]{bf90}.  The first
one deals with non-negative initial data whose entropies are
finite.

We recall that $G$ is a non-negative convex function such that 
$$G''(u)=\frac{1}{f(u)} \quad \mbox{ for all $u>0$}.$$

\begin{theorem}\label{thm:main}
Let $n>1$ and $u_0 \in H^1 (\Omega)$ be such that $u_0 \ge 0$ and 
\begin{equation}\label{eq:entropy_init}
 \int _\Omega G(u_0)\, dx <\infty.
 \end{equation}
For all $T>0$ there exists a function  $u(t,x)\geq 0$ with
$$
u \in \mathcal C (0,T;L^2 (\Omega)) \cap L^\infty
(0,T; H^1 (\Omega)), \quad u_x \in L^2 (0,T;H^1_0(\Omega))
$$
such that, for all
$\phi \in \dis ([0,T) \times \bar \Omega)$ satisfying $\phi_x = 0 $ on
$(0,T) \times \partial \Omega$,
\begin{multline}\label{eq:weak}
  \iint_Q u  \phi_t  - f (u) [ u_{xx}-
  I(u)] \phi_{xx}  - f'(u)u_x (u_{xx}-I(u)) \phi_x  \, dt\, dx \\
+ \intom u_0 (x) \phi(0,x) \, dx=0.
\end{multline}
Moreover, the function $u$ satisfies for  every $t \in
[0,T]$, 
\begin{eqnarray}
 \intom u (t,x) \, dx & = &  \intom u_0 (x) \, dx,\nonumber \\
  \mathcal{E} (u(t)) + \int_0^t \intom f (u) \big[(u_{xx} -
I(u))_x\big]^2 \, ds\, dx & \le & \mathcal{E} (u_0), \label{eq:nrj}\\
 \intom G(u (t))\, dx+  \int_0^t \intom
(u_{xx})^2+u_x I(u)_x \, ds dx 
&\le& \intom G(u_0) \, dx .
\label{eq:ent}
\end{eqnarray}
\end{theorem}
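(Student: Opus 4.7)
The plan is to implement the regularization strategy outlined by the authors. For $\eps>0$, replace the mobility $f(u)$ by $f_\eps(u)=f(u)+\eps$, extended as $\eps$ for $u\le 0$, so that the equation becomes uniformly fourth-order parabolic, and approximate $u_0$ by smooth strictly positive data $u_{0,\eps}$ converging to $u_0$ in $H^1(\Omega)$ with $\mathcal{E}(u_{0,\eps})\to\mathcal{E}(u_0)$ and $\intom G_\eps(u_{0,\eps})\to\intom G(u_0)$, where $G_\eps''=1/f_\eps$. For each $\eps$, classical parabolic theory produces a smooth solution $u_\eps$ of the regularized Neumann problem on a short time interval: one can use a Galerkin scheme based on the eigenfunctions of the Neumann Laplacian (which are simultaneously eigenfunctions of $I$, since the paper defines $I$ as the square root of that operator), combined with a fixed-point argument to close the nonlinearities. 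The uniform a priori bounds derived below will extend $u_\eps$ to $[0,T]$.

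The core of the proof is to derive $\eps$-uniform estimates. Formally multiplying the regularized equation by $-(u_{\eps,xx}-I(u_\eps))$ yields the energy inequality \eqref{eq:nrj-intro}, and multiplying by $G_\eps'(u_\eps)$ yields the entropy inequality \eqref{eq:ent-intro}. Because $u_\eps$ is not known to be non-negative at this stage, mass conservation provides only $\intom u_\eps=\intom u_{0,\eps}$ rather than an $L^1$ bound, so Lemma~\ref{lem:h1} is not directly applicable. Instead, one works with the combined Lyapunov functional $\mathcal{E}(u_\eps)+K\intom G_\eps(u_\eps)$, in the spirit of \cite{st04}: the interpolation $\|u\|_{\dot H^{3/2}_N}^2\le\tfrac12\|u\|_{\dot H^2}^2+\tfrac{C}{2}\|u\|_{\dot H^1}^2$ from the introduction absorbs the indefinite term in the entropy dissipation, and a Gronwall argument (using Poincar\'e--Wirtinger in $1$D together with the conserved mean of $u_\eps$) closes the estimate. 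One obtains uniform bounds $u_\eps\in L^\infty(0,T;H^1(\Omega))\cap L^2(0,T;H^2(\Omega))$ and $f_\eps(u_\eps)^{1/2}(u_{\eps,xx}-I(u_\eps))_x\in L^2((0,T)\times\Omega)$.

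From the equation itself, $u_{\eps,t}$ is bounded in $L^2(0,T;H^{-1}(\Omega))$, so by the Aubin--Lions lemma a subsequence converges strongly in $\mathcal{C}([0,T];L^2(\Omega))$ (and by interpolation in $L^2(0,T;H^1(\Omega))$) to some limit $u$, with weak limits for $u_{\eps,xx}$ and $I(u_\eps)$ in $L^2$. Non-negativity of $u$ is then recovered from the entropy bound: on $\{u<0\}$ the regularized primitive satisfies $G_\eps(u)\ge c\, u_-^2/\eps$, so the uniform bound on $\intom G_\eps(u_\eps)$ forces the negative part of $u_\eps$ to zero as $\eps\to 0$. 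Passing to the limit in the weak formulation \eqref{eq:weak} combines strong convergence of $u_\eps$ and $u_{\eps,x}$ (via Sobolev embedding and the $L^\infty_tH^1_x\cap L^2_tH^2_x$ bound) with weak convergence of $u_{\eps,xx}-I(u_\eps)$; mass conservation together with \eqref{eq:nrj}--\eqref{eq:ent} follow by weak lower semi-continuity and Fatou. The principal obstacle throughout is the careful handling of $I$: justifying the integration-by-parts identities underlying the entropy computation (in particular on the Galerkin subspaces), ensuring compatibility with the Neumann boundary conditions so that $\intom u_xI(u)_x$ is well defined and equals $-\|u\|_{\dot H^{3/2}_N}^2$, and identifying weak limits of products involving $I(u_\eps)$ without the convenience of a Leibniz rule.
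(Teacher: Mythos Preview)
Your overall strategy is correct and reaches the same conclusion, but the route differs from the paper's in one essential place: the choice of Lyapunov functional used to close the uniform $H^1$ bound for the regularized solutions.

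You combine the energy $\mathcal{E}(u_\eps)$ with the entropy $K\intom G_\eps(u_\eps)$ and invoke Poincar\'e--Wirtinger plus mass conservation to control $\|u_\eps\|_{\dot H^1}^2$ by the Lyapunov functional. This works on $\Omega=(0,1)$ with the normalisation $c=\lambda=1$, since then $\lambda_k=(k\pi)^2\ge k\pi=\lambda_k^{1/2}$ and hence $\mathcal{E}(u)\ge\tfrac12(1-\tfrac1\pi)\|u\|_{\dot H^1}^2$; the energy is in fact coercive by itself. The paper, by contrast, does not rely on this spectral coincidence: it introduces a \emph{third} integral inequality, obtained by testing with $-u^\eps_{xx}$ (estimate \eqref{eq:estim-h1}), and combines it with the entropy into the functional $H_\eps(v)=\intom v_x^2+2M\intom G_\eps(v)$ (Lemma~\ref{lem:add}). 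The cap $f_\eps\le M$ in the paper's regularization $f_\eps=\min(\max(\eps,f(|u|)),M)$ makes $G_\eps$ grow quadratically at infinity, so $H_\eps$ controls $\|u_\eps\|_{\dot H^1}^2$ without any smallness of the domain or of the destabilizing coefficient; this is what buys robustness when $\mathcal{E}$ is genuinely indefinite. Your additive regularization $f_\eps=f+\eps$ does not give $G_\eps$ this quadratic growth for large positive $u$, so your argument leans on the coercivity of $\mathcal{E}$ specific to the present setting.

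The remaining differences are more cosmetic: you build the regularized solutions by a Galerkin scheme in the Neumann eigenbasis, whereas the paper uses a Leray--Schauder fixed point over a linear fourth-order problem (Proposition~\ref{prop:corner} and Lemma~\ref{lem:pf}); and you extract compactness via Aubin--Lions, while the paper also derives $\mathcal C^{1/2,1/8}$ H\"older bounds to get uniform convergence. Both sets of choices are adequate for passing to the limit in \eqref{eq:weak} and for the lower-semicontinuity arguments yielding \eqref{eq:nrj}--\eqref{eq:ent}.
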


We point out that the weak formulation (\ref{eq:weak}) involve two
integrations by parts.  Our second main result is concerned with
non-negative initial data whose entropies are possibly infinite (this
is the case if $u_0$ vanishes on an open subset of $\Omega$ and $n\geq
2$).  In that case, only one integration by parts is possible, and the
solutions that we construct are weaker than those constructed in
Theorem \ref{thm:main}.  In particular, the equation is only satisfied
on the positivity set of the solution and the boundary conditions are
satisfied in a weaker sense.
\begin{theorem}\label{thm:second}
Assume $n >1$ and let $u_0 \in H^1 (\Omega)$ be such that  $u_0 \ge 0$. 
For all $T>0$ there exists a function  $u(t,x)\geq 0$ such that
$$
u \in \mathcal C (0,T;L^2 (\Omega)) \cap
  L^\infty(0,T; H^1 (\Omega)) \cap \mathcal
  C^{\frac12,\frac18}(\Omega\times(0,T))
$$
such that 
\begin{eqnarray*}
  f (u) [ u_{xx}-  I(u)]_x \in L^2 (P)
\end{eqnarray*}
and such that, for all $\phi \in \dis ([0,T) \times \bar \Omega)$, 
\begin{equation}\label{eq:weak-bis}
  \iint_Q u  \phi_t \, dt\, dx + \iint_P f (u) [ u_{xx}-
  I(u)]_x \phi_x  \, dt\, dx 
+ \intom u_0 (x) \phi (0,x) \, dx= 0
\end{equation}
where $P = \{ (x,t) \in \bar Q: u(x,t)>0,t>0\}$.  Moreover, the function $u$
satisfies the conservation of mass and the energy inequality \eqref{eq:nrj}.

Finally, $u_x$ vanishes at all points $(x,t)$ of $\partial \Omega \times (0,T)$
such that $u(x,t) \neq 0$.
\end{theorem}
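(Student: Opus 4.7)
The plan is to regularize the initial datum so that Theorem~\ref{thm:main} applies, then pass to the limit in a weaker sense than (\ref{eq:weak}). Set $u_0^\eps := u_0 + \eps$; since $u_0^\eps \ge \eps > 0$ and $u_0$ is bounded via the embedding $H^1(\Omega) \hookrightarrow C^{1/2}(\bar\Omega)$, one has $\intom G(u_0^\eps)\,dx < \infty$, so Theorem~\ref{thm:main} furnishes a non-negative weak solution $u_\eps$ with initial datum $u_0^\eps$. Since $u_0^\eps \to u_0$ in $H^1$, the quantity $\mathcal{E}(u_0^\eps)$ is bounded uniformly in $\eps$; combining (\ref{eq:nrj}) with Lemma~\ref{lem:h1} and mass conservation yields a uniform bound for $u_\eps$ in $L^\infty(0,T;H^1(\Omega)) \hookrightarrow L^\infty(Q)$. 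Writing the flux as
$$
J_\eps := f(u_\eps)(u_{\eps,xx} - I(u_\eps))_x = \sqrt{f(u_\eps)} \cdot \bigl[\sqrt{f(u_\eps)}(u_{\eps,xx} - I(u_\eps))_x\bigr],
$$
the dissipation term in (\ref{eq:nrj}) together with the $L^\infty$ bound on $u_\eps$ shows that $J_\eps$ is bounded in $L^2(Q)$.

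The $\mathcal C^{1/2,1/8}$ regularity of $u_\eps$, uniform in $\eps$, follows from a Bernis--Friedman argument applied to the flux form $u_{\eps,t} = -(J_\eps)_x$: in space, $H^1 \hookrightarrow C^{1/2}$; in time, testing the weak equation against a spatial hat function of width $\delta$ centred at $x_0$ gives
$$
|u_\eps(t,x_0) - u_\eps(s,x_0)| \le C\delta^{1/2} + C\delta^{-3/2}|t-s|^{1/2},
$$
and optimizing $\delta \sim |t-s|^{1/4}$ produces the $1/8$ temporal exponent. By Arzel\`a--Ascoli a subsequence converges uniformly on $\bar Q$ to a limit $u \in \mathcal C^{1/2,1/8}$ with $u \ge 0$; one also has $u_\eps \rightharpoonup u$ weak-$*$ in $L^\infty_t H^1_x$, $J_\eps \rightharpoonup J$ weakly in $L^2(Q)$, and mass conservation passes to the limit. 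The energy inequality (\ref{eq:nrj}) for $u$ follows from lower semicontinuity of $\|\cdot\|_{\dot H^1}$, continuity of $\|\cdot\|_{\dot H^{1/2}}$ along a sequence converging in $H^s$ for $s\in(1/2,1)$ (which holds by interpolation of the uniform convergence with the $H^1$ bound), and Fatou on the dissipation.

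It remains to identify $J$ and pass to the limit in the equation. On $Q \setminus P$, the factorization above combined with the uniform convergence $\sqrt{f(u_\eps)} \to \sqrt{f(u)}$ forces $J = \sqrt{f(u)}\, H$ for some $H \in L^2$; since $f(u) = 0$ where $u = 0$, this gives $J = 0$ a.e.\ on $Q \setminus P$. On any compact $K \subset P$, uniform convergence gives $u_\eps \ge c_K > 0$ on $K$ for $\eps$ small, so $u_\eps$ solves a uniformly parabolic fourth-order equation on $K$; local parabolic regularity, treating $I(u_\eps)$ as a lower-order perturbation controlled by the uniform $L^\infty_t H^1_x$ bound, upgrades convergence enough to identify $J = f(u)(u_{xx} - I(u))_x$ on $P$. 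Passing to the limit in
$$
\iint_Q u_\eps \phi_t + J_\eps \phi_x \, dt\,dx + \intom u_0^\eps \phi(0,x)\,dx = 0
$$
then yields (\ref{eq:weak-bis}); the boundary assertion $u_x = 0$ on $\pa\Omega$ at points where $u \ne 0$ follows from the same local regularity argument applied in a neighbourhood of a boundary point, since the Neumann condition $u_{\eps,x}|_{\pa\Omega} = 0$ is preserved by the smooth convergence there. The main obstacle is precisely the identification of $J$ on $P$: the nonlocal character of $I$ prevents a naive use of interior Schauder estimates, and one must control the tail contributions of $I(u_\eps)$ through the global $H^1$ bound in order to treat $I$ as a manageable lower-order term in the local analysis on $K$.
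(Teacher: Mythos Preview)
Your overall strategy --- lift the initial datum, apply Theorem~\ref{thm:main}, extract uniform $L^\infty_tH^1_x$ and H\"older bounds from the energy inequality together with Lemma~\ref{lem:h1} and mass conservation, then pass to the limit in a flux-form weak formulation --- is precisely the paper's. The factorization $J_\eps=\sqrt{f(u_\eps)}\cdot H_\eps$ with $H_\eps$ bounded in $L^2(Q)$, used to force $J=0$ on $\{u=0\}$, also matches the paper's argument.

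The substantive divergence is in identifying $J$ on $P$. You invoke local parabolic regularity on compacta $K\subset P$, treating $I(u_\eps)$ as a lower-order perturbation, and you rightly flag this as the main obstacle: interior regularity theory for fourth-order equations with a genuinely nonlocal term is not off-the-shelf (the paper's introduction says as much when explaining why it avoids Schauder estimates). The paper bypasses this with a soft argument. On $\{u>\eta\}$ one has $f(u_\eps)\ge c_\eta>0$ for $\eps$ small, so the uniform $L^2(Q)$ bound on $J_\eps$ directly yields a uniform bound on $(u_\eps)_{xxx}-I(u_\eps)_x$ in $L^2(\{u>\eta\})$. Weak $L^2$ compactness produces a subsequential limit; since $u_\eps\to u$ uniformly, one has $u_{\eps,xxx}\to u_{xxx}$ and $I(u_\eps)_x\to I(u)_x$ in the sense of distributions (the latter via the integral representation of $I$), which pins down the weak $L^2$ limit as $u_{xxx}-I(u)_x$. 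Multiplying by the uniformly convergent factor $f(u_\eps)$ gives $J=f(u)(u_{xx}-I(u))_x$ on $\{u>\eta\}$ for every $\eta>0$, hence on $P$. No regularity theory beyond the energy dissipation is used, and the nonlocal tail issue you raise never arises.

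A smaller gap: you pass to the limit in the identity $\iint_Q u_\eps\phi_t+J_\eps\phi_x\,dt\,dx+\intom u_0^\eps\phi(0,\cdot)\,dx=0$, but Theorem~\ref{thm:main} only delivers the twice-integrated form (\ref{eq:weak}); the third derivative of $u_\eps$ is not known to be a function on all of $Q$, so the flux form is not automatic. The paper handles this by a separate step (its Corollary~\ref{cor:thm-main}), running the same weak-$L^2$ identification argument one level down --- from the $H^3_N$ approximants of Theorem~\ref{thm:existence-reg}, which \emph{do} satisfy the flux form, to the Theorem~\ref{thm:main} solution --- to obtain the flux form (over the positivity set) for each $u_\eps$ before letting $\eps\to0$.
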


\paragraph{Comments.}
These results are comparable to those of \cite{bf90} when $\lambda=0$.
The reader might be surprised that they are presented in a different
order than in \cite{bf90}.  The reason has to do with the proofs;
indeed, in contrast with \cite{bf90}, weak solutions given by
Theorem~\ref{thm:second} are constructed as limits of the solutions
given by Theorem~\ref{thm:main}.  This is because the entropy is
needed in order to construct the non-negative solutions. See the
discussion at the beginning of Section~\ref{sec:proof1} for further
details.

As pointed out earlier, the non-linearities in
front of the stabilizing ($u_{xxx}$) and destabilizing ($(I(u))_x$)
are the same. This is in contrast with the work of Bertozzi and Pugh
\cite{bp98,bp00}. By analogy with \eqref{eq:long-wave}, one could
consider the equation
$$
u_t + (u^n u_{xxx} - u^m (I(u))_x)_x =0
$$
in which case a scaling analysis similar to that of \cite{bp98}
suggests that blow up can only occur if $m \ge n+1$. However, to our
knowledge, there is no physical motivation for such a generalization
(in our case).

\paragraph{Organization of the article.} In Section~\ref{sec:phys}, we give
more details about the physical model leading to (\ref{eq:0}).  We
gather, in Section~\ref{sec:pre}, material that will be used throughout the
paper. In particular, we detail the functional analysis framework and the
definition of the non-local operator $I$ (which is similar to that used in
\cite{im}). Section~\ref{sec:regularized},~\ref{sec:proof1} and
\ref{sec:proof2} are devoted to the proofs of the main results. Finally, we
give in Appendix a technical result which is more or less classical. 

\paragraph{Acknowledgements.} The first author was partially supported
by the French Ministry of Research (ANR project “EVOL”). The second
author was partially supported by NSF Grant DMS-0901340.

\section{Physical model}
\label{sec:phys}
In this section, we briefly recall the  derivation of
(\ref{eq:0}) (see \cite{tp07} for further details).  We consider a
viscous liquid film which completely wets a solid horizontal substrate
and is constrained between two solid walls (at $x=0$ and $x=1$), see
Figure~1.
\begin{figure}[h]
\begin{center}
\includegraphics[height=3cm]{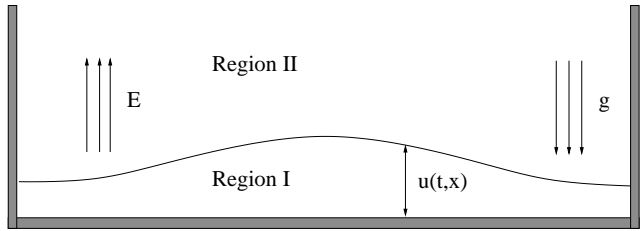}
\caption{A viscous thin film submitted to an electric field ${\bf E}$ and
  gravity $g$}
\end{center}\label{fig:film}
\end{figure}
The fluid is Newtonian and is assumed to be a perfect conductor.  The
substrate is a grounded electrode held at zero voltage.  Thanks to the
presence of another electrode (at infinity), an electric field ${\bf E}$ is
created which is constant at infinity (in the direction perpendicular
to the substrate):
$$ 
{\bf E}(x,y)\longrightarrow (0,E_0) \qquad \mbox{ as }
y\rightarrow +\infty.
$$

The height of the fluid is denoted by $u(t,x)$. Under the assumptions
of the lubrication approximation, it is classical that the evolution
of $u$ is described by Poiseuille's law:
\begin{equation}\label{eq:pois}
u_t -\pa_x \left(\frac{u^3}{3\mu}\pa_x p\right)=0
\end{equation}
where $p$ is the pressure at the free surface of the fluid $y=u(t,x)$. 
This pressure is the sum of three terms:
\begin{enumerate}
\item The capillary pressure due to surface tension, which can be approximated by 
$$ 
p_1 \sim -\sigma u_{xx}
$$
(replacing the mean curvature operator by the Laplacian).
\item The effect of gravity, given by
$$p_2 =   gu.$$
\item The additional pressure due to the action of the electric field $E$.
\end{enumerate}
To compute the third term appearing in the pressure, we introduce the
potential $V$ such that ${\bf E}=-\na V$, which satisfies
$$ 
\Delta V=0 \mbox{ for } y\geq u(x)
$$
and 
$$ V(x,y)=0 \quad \mbox{ on $y=u(x)$.}$$
The condition at $y\rightarrow \infty$ means that we can write
$$ V\sim E_0(Y_0 - y)$$ 
with (using standard linear approximation)
\begin{equation}\label{eq:extt}
\left\{
\begin{array}{ll}
\Delta Y_0=0 & \mbox{ for } y>0 , \; x\in \Omega\\
\nabla Y_0 \rightarrow 0 & \mbox{ as } y\rightarrow \infty  , \; x\in \Omega\\
Y_0 (x,0)=  u(x)  & \mbox{ for }  x\in \Omega.
\end{array}
\right.\end{equation}
At the boundary of the cylinder, we assume that the electric field has no horizontal component:
$$ \pa _x V=0,\quad  \mbox{ for } x\in\pa\Omega , \; y>0.$$
The pressure exerted by the electric field is then proportional to 
$$ 
p_3 = \gamma {\bf E}_y = -\gamma \pa_y V(x,0)=-\gamma E_0(\pa_y Y_0-1).
$$
The application $u\mapsto \pa_y Y_0(x,0)$ is a Dirichlet-to-Neumann
map for the harmonic extension problem (\ref{eq:extt}).  We denote
this operator by $I(u)$. We will see in Section~\ref{sec:pre} that
$I(u)$ is in fact the square root of the Laplace operator on the
interval $\Omega$ with homogeneous Neumann boundary conditions.

We thus have
$$
p =p_1+p_2+p_3=  - \sigma u_{xx} + g u - \gamma E_0 I (u) +c_0
$$
for some constant $c_0$, and we obtain \eqref{eq:pp} with
$c=\frac{\sigma}{3\mu}$, $\alpha = \frac{g}{3\mu}$ and $\lambda =
-\frac{\gamma E_0}{3 \mu}$.  Note that Poiseuille's law (\ref{eq:pois}) is obtained
under the no-slip condition for the fluid along the solid
support. Other conditions, such as the Navier slip condition leads to
$$ 
f(u)=u^3+\Lambda u^s
$$
with $s=1$ or $s=2$. This explains the interest of the community for
general diffusion coefficient $f(u)$.

\paragraph{Boundary conditions.}
Along the boundary $\pa \Omega$, the fluid is in contact with a solid
wall. It is thus natural to consider a contact angle condition at
$x=0$ and $x=1$: Assuming that the contact angle is equal to $\pi/2$,
we then get the boundary condition
$$ u_x=0\qquad \mbox{ on } \pa\Omega.$$

In \cite{tp07}, the authors derive
their analytic results in a periodic setting which is obtained
by considering the even extension of $u$ to the interval $(-1,1)$ (recall
that $\Omega=(0,1)$) and then taking the periodic extension (with period
$2$) to $\R$.

Finally, since the equation is of order $4$, we need an additional boundary condition. 
We thus assume that $u$ satisfies the following null-flux condition 
$$
u^3 (u_{xx}-I(u))_x=0 \qquad \mbox{ on } \pa\Omega
$$
which will guarantee the conservation of mass.

\section{Preliminaries}\label{sec:pre}

In this section, we recall how the operator $I$ is defined (see
\cite{im}) and give the functional analysis results that we will need
to prove the main theorem.  A very similar operator, with Dirichlet
boundary conditions rather than Neumann boundary conditions, was
studied by Cabr\'e and Tan \cite{ct09}.

\subsection{Functional spaces}

\paragraph{The space $H^s_N (\Omega)$.}
We denote by $\{\lambda_k,\vphi_k\}_{k= 0,1,2\dots}$ the eigenvalues
and corresponding eigenfunctions of the Laplace operator in $\Omega$
with Neumann boundary conditions on $\pa\Omega$:
\begin{equation}\label{eq:eigen}
\left\{
\begin{array}{ll}
-\Delta \vphi_k = \lambda_k\vphi_k & \mbox{ in } \Omega\\
\pa_\nu \vphi_k = 0 & \mbox{ on } \pa\Omega,
\end{array}
\right. 
\end{equation}
normalized so that $\int_\Omega \vphi_k^2\, dx=1$.  When $\Omega =
(0,1)$, we have
$$
\lambda_0=0\, , \qquad \vphi_0(x) = 1
$$
and
$$ 
\lambda_k = (k\pi)^2 \, , \qquad \vphi_k(x) = \sqrt 2 \cos(k\pi x)\,
\qquad k= 1,\,2,\, 3, \, \dots
$$
The $\vphi_k$'s clearly form an orthonormal basis of $L^2(\Omega)$.
Furthermore, the $\vphi_k$'s also form an orthogonal basis of the
space $H^s_N(\Omega)$ defined by
$$
H^s_N(\Omega) = \left\{u=\sum_{k=0}^\infty c_k\vphi_k \, ;\,
  \sum_{k=0}^\infty c_k^2(1+\lambda_k^s) <+\infty \right\}
$$
equipped with the norm
$$ 
||u||_{H^s_N(\Omega)}^2= \sum_{k=0}^\infty c_k^2(1+\lambda_k^s) 
$$
or equivalently (noting that $c_0= \int_\Omega u(x)\, dx$ and
$\lambda_k\geq 1$ for $k\geq 1$):
$$ 
||u||_{H^s_N(\Omega)}^2= \|u\|_{L^1 (\Omega)}^2 + \| u\|^2_{\dot{H}^s_N (\Omega)}
$$
where the homogeneous norm is given by:
$$
\| u\|^2_{\dot{H}^s_N (\Omega)} = \sum_{k=1}^\infty c_k^2\lambda_k^s .
$$

\paragraph{A characterisation of $H^s_N (\Omega)$.}
The precise description of the space $H^s_N(\Omega)$ is a classical
problem.

Intuitively, for $s<3/2$, the boundary condition $u_\nu=0$ does not
make sense, and one can show that (see Agranovich and Amosov
\cite{aa03} and references therein):
$$ 
H^s_N(\Omega)=H^s(\Omega) \quad \mbox{ for all } 0\leq s<\frac32.
$$
In particular, we have $H^\frac{1}{2}_N(\Omega)=\ha$ and we will see
later that 
$$
\|u\|^2_{\dot{H}^{\frac12} (\Omega)} = \int_\Omega \int_\Omega (u(y)-u(x))^2 \nu (x,y) dx dy
$$
where $\nu (x,y)$ is a given positive function; see \eqref{defi:nu} below.

For $s>3/2$, the Neumann condition has to be taken into account, and
we have in particular
$$  
H^2_N(\Omega) = \{ u\in H^2(\Omega)\, ;\, u_\nu=0 \mbox{ on }\pa\Omega\}
$$
which will play a particular role in the sequel.  More generally, a
similar characterization holds for $3/2<s<7/2$. For $s>7/2$,
additional boundary conditions would have to be taken into account,
but we will not use such spaces in this paper.  In
Section~\ref{sec:regularized}, we will also work with the space $H^3_N
(\Omega)$ which is exactly the set of functions in $H^3(\Omega)$
satisfying $u_\nu = 0$ on $\partial \Omega$.

The case $s=3/2$ is critical (note that $u_\nu|_{\pa\Omega}$ is not
well defined in that space) and one can show that
\begin{eqnarray*} 
  H^{\frac32}_N(\Omega) 
  & = & \left\{ u\in H^{\frac32}(\Omega)\, ;\, \int_\Omega \frac{u_x^2}{d(x)}\, dx <\infty\right\}  
\end{eqnarray*}
where $d(x)$ denotes the distance to $\pa\Omega$. A similar result
appears in \cite{ct09}; more precisely, such a characterization of
$H^{\frac32}_N(\Omega)$ can be obtained by considering functions $u$
such that $u_x \in \mathcal V_0(\Omega)$ where $\mathcal V_0(\Omega)$
is defined in \cite{ct09} as the equivalent of our space
$H^{1/2}_N(\Omega)$ with Dirichlet rather than Neumann boundary
conditions.  We do not dwell on this issue since we will not need this
result in this paper.

\subsection{The operator $I$}

As it is explained in the Introduction, the operator $I$ is related to
the computation of the pressure as a function of the height of the
fluid. 

\paragraph{Spectral definition.} With $\lambda_k$ and
$\vphi_k$ defined by (\ref{eq:eigen}), we define the operator
\begin{equation}\label{eq:Idef}
  I: \sum_{k=0}^{\infty} c_k \vphi_k \; \longmapsto \; 
- \sum_{k=0}^{\infty} c_k \lambda_k^{\frac12} \vphi_k
\end{equation}
which clearly maps $H^1(\Omega)$ onto $L^2(\Omega)$ and
$H^2_N(\Omega)$ onto $H^1(\Omega)$.

\paragraph{Dirichlet-to-Neuman map.} 
We now check that this definition of the operator $I$ is the same as
the one given in Section \ref{sec:phys}, namely $I$ is the
Dirichlet-to-Neumann operator associated with the Laplace operator
supplemented with Neumann boundary conditions:

We consider the following extension problem:
\begin{equation}\label{eq:exp}
\left\{
\begin{array}{ll}
-\Delta v = 0 & \mbox{ in } \Omega \times (0,+\infty), \\
v(x,0)= u(x) &\mbox{ on } \Omega, \\
v_\nu = 0 & \mbox{ on } \pa \Omega \times (0,\infty).
\end{array}
\right.
\end{equation}
Then, we can show (see \cite{im}):
\begin{proposition}[\cite{im}]
  For all $u\in H^{\frac12}_N(\Omega)$, there exists a unique extension $v\in
  H^{1}( \Omega \times (0,+\infty))$ solution of (\ref{eq:exp}).  
\item Furthermore, if $u(x)=\sum_{k=1}^{\infty} c_k \vphi_k(x)$, then
\begin{equation}\label{eq:v}
v(x,y)=\sum_{k=1}^{\infty} c_k \vphi_k(x) \exp(-\lambda_k^{\frac12}y).
\end{equation}
\end{proposition}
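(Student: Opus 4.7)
The plan is to establish existence by an explicit spectral construction, then uniqueness by an energy argument, with some care taken at $y = +\infty$.

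For existence, I would take \eqref{eq:v} as the definition of $v$ and verify that it produces the desired extension. Expand $u = \sum_{k \ge 0} c_k \vphi_k$ and define the partial sums $v_N(x,y) = \sum_{k=1}^N c_k \vphi_k(x) \exp(-\lambda_k^{1/2} y)$. Each $v_N$ is smooth, is annihilated by $\Delta$ on $\Omega \times (0,\infty)$ because $-\pa_x^2 \vphi_k = \lambda_k \vphi_k = \pa_y^2 (e^{-\lambda_k^{1/2} y})$, and satisfies $\pa_\nu v_N = 0$ on $\pa\Omega \times (0,\infty)$ since $\vphi_k'(0) = \vphi_k'(1) = 0$. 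Using orthonormality of $\{\vphi_k\}$ in $L^2(\Omega)$ and $\intom (\vphi_k')^2 \, dx = \lambda_k$, direct computation gives
$$
\int_0^\infty \intom |\na v_N|^2 \, dx \, dy = \sum_{k=1}^N c_k^2 \lambda_k^{1/2}, \qquad \int_0^\infty \intom v_N^2 \, dx \, dy = \sum_{k=1}^N \frac{c_k^2}{2 \lambda_k^{1/2}}.
$$
Since $\|u\|^2_{\dot H^{1/2}_N(\Omega)} = \sum_{k \ge 1} c_k^2 \lambda_k^{1/2} < \infty$ and $\lambda_k \ge 1$ for $k \ge 1$, both sums converge; hence $\{v_N\}$ is Cauchy in $H^1(\Omega \times (0,\infty))$. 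The limit $v$ is then harmonic in the distributional sense, has trace $u$ on $y=0$ by continuity of the trace operator, and inherits the lateral Neumann condition. Note that the $k=0$ mode must be omitted because a nonzero constant in $y$ is not square-integrable on the half-strip; the proposition should therefore be read as dealing with the mean-zero part of $u$, while the mean is irrelevant for $I$, which annihilates constants.

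For uniqueness, let $w = v_1 - v_2 \in H^1(\Omega \times (0,\infty))$ solve the homogeneous version of \eqref{eq:exp}. Since $w$ is harmonic, it is smooth in the interior of the half-strip, so for every $R > 0$ integration by parts on $\Omega \times (0,R)$ yields
$$
\int_0^R \intom |\na w|^2 \, dx \, dy = \intom w(x,R)\, w_y(x,R) \, dx,
$$
with the bottom boundary term vanishing by the homogeneous Dirichlet condition and the lateral term vanishing by the Neumann condition. Because $w$ and $w_y$ both belong to $L^2(\Omega \times (0,\infty))$, Fubini ensures that the maps $R \mapsto \intom w(x,R)^2 \, dx$ and $R \mapsto \intom w_y(x,R)^2 \, dx$ lie in $L^1(0,\infty)$; hence a sequence $R_n \to \infty$ can be chosen along which both tend to zero, and Cauchy--Schwarz forces the right-hand side to vanish in the limit. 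Thus $\na w \equiv 0$, and combined with $w|_{y=0}=0$ this gives $w \equiv 0$.

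The only delicate point is the book-keeping at $y = +\infty$: the $H^1$ framework obliges us to exclude the constant mode from \eqref{eq:v}, and the energy argument requires an almost-everywhere selection of decaying slices to dispose of the boundary term at $R = \infty$. Once these mild issues are handled, the proof reduces to the standard spectral calculus for the Neumann Laplacian on $\Omega = (0,1)$ together with a textbook energy estimate.
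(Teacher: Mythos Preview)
Your proof is correct and follows the standard route: explicit spectral synthesis for existence, followed by an energy/integration-by-parts argument on truncated strips for uniqueness. The computations of the $H^1$ norm of $v_N$ are right, and your handling of the boundary term at $y=R$ via a sequence $R_n\to\infty$ along which both $\|w(\cdot,R_n)\|_{L^2}^2$ and $\|w_y(\cdot,R_n)\|_{L^2}^2$ vanish (using that their sum lies in $L^1(0,\infty)$) is the clean way to close the argument.

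Note, however, that the paper does \emph{not} supply its own proof of this proposition: it is quoted from the companion paper \cite{im}, and no argument appears here. So there is nothing in the present paper to compare your approach against. What you have written is essentially the proof one would expect to find in \cite{im} (or in Cabr\'e--Tan \cite{ct09} for the Dirichlet analogue).

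Your remark about the $k=0$ mode is well taken and worth emphasizing: the proposition as stated asserts existence of an $H^1(\Omega\times(0,\infty))$ extension for \emph{every} $u\in H^{1/2}_N(\Omega)$, but a nonzero constant has no such extension, since the only bounded harmonic extension of a constant is the constant itself, which fails to be square-integrable on the infinite strip. The formula \eqref{eq:v} already sums from $k=1$, and the hypothesis ``if $u(x)=\sum_{k=1}^\infty c_k\vphi_k(x)$'' implicitly imposes $c_0=0$; your reading of the proposition as a statement about the mean-free part of $u$ is the correct one, and this is harmless for the sequel since $I$ annihilates constants.
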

and we have:
\begin{proposition}[\cite{im}] \label{prop:dir-to-neum} 
For all $u\in H^2_N(\Omega)$, we
  have
$$
I(u)(x)= - \frac{\pa v}{\pa\nu}(x,0) =\pa_y v(x,0)\quad \mbox{ for all $x \in \Omega$,}
$$
where $v$ is the unique harmonic extension solution of (\ref{eq:exp}).
\item Furthermore $I\circ I (u)= -\Delta u$.
\end{proposition}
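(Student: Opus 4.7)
The plan is to exploit the explicit spectral representation of the harmonic extension $v$ provided by the previous proposition, and to recover $I(u)$ as a termwise $y$-derivative at $y=0$. Because of the spectral definition \eqref{eq:Idef} of $I$, the two statements reduce essentially to algebraic identities between Fourier series, and the only real work is in justifying the convergence.

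First I would write $u = \sum_{k\ge 0} c_k \varphi_k$ (with $c_k = \int_\Omega u\,\varphi_k\,dx$) and use the formula
$$
v(x,y) = \sum_{k\ge 0} c_k \varphi_k(x)\, \exp(-\lambda_k^{1/2} y)
$$
from the preceding proposition, the $k=0$ term being simply the constant $c_0$. Differentiating term by term in $y$ and evaluating at $y=0$ gives
$$
\pa_y v(x,0) = -\sum_{k\ge 1} c_k \lambda_k^{1/2} \varphi_k(x),
$$
which is exactly $I(u)(x)$ by \eqref{eq:Idef} (the $k=0$ contribution vanishes because $\lambda_0 = 0$). The outward unit normal to $\Omega\times(0,\infty)$ on $\Omega\times\{0\}$ is $-e_y$, so $\pa v/\pa\nu = -\pa_y v$ there, which gives the first equality in the statement.

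To make the termwise differentiation rigorous I would note that, since $u \in H^2_N(\Omega)$, we have $\sum_{k\ge 1} c_k^2 \lambda_k^2 < \infty$, hence in particular $\sum_{k\ge 1} c_k^2 \lambda_k < \infty$. Consequently, the partial sums of $-\sum c_k \lambda_k^{1/2} \varphi_k$ form a Cauchy sequence in $H^1(\Omega)$, and the series for $\pa_y v(\cdot,y)$ converges uniformly in $y$ to this limit in $H^1(\Omega)$ (the exponentials are bounded by $1$). In particular $\pa_y v(\cdot,y) \to I(u)$ in $H^1(\Omega) \subset L^2(\Omega)$ as $y \to 0^+$, which is enough to identify the trace and to conclude that $I(u)$ agrees with $\pa_y v(\cdot,0)$ as elements of $L^2(\Omega)$ (and actually in $H^1(\Omega)$).

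Finally, for the identity $I\circ I(u) = -\Delta u$, I would apply the spectral definition twice: for $u \in H^2_N(\Omega)$ the series $I(u) = -\sum_{k\ge 1} c_k \lambda_k^{1/2}\varphi_k$ converges in $H^1(\Omega)$, and a second application of \eqref{eq:Idef} yields
$$
I(I(u)) = \sum_{k\ge 1} c_k \lambda_k \varphi_k,
$$
the series converging in $L^2(\Omega)$ thanks to $u \in H^2_N(\Omega)$. Since $-\Delta\varphi_k = \lambda_k\varphi_k$ and $u \in H^2_N(\Omega)$, the right-hand side coincides with $-\Delta u$ in $L^2(\Omega)$. The main obstacle throughout is simply bookkeeping of convergence of the Fourier series (especially the boundary trace at $y=0$ after differentiation), and this is handled cleanly by the assumption $u \in H^2_N(\Omega)$, which provides exactly the decay on $c_k$ needed at each step.
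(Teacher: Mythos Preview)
Your argument is correct and is exactly the natural spectral approach: use the explicit series \eqref{eq:v} for the harmonic extension, differentiate termwise in $y$, and match with the spectral definition \eqref{eq:Idef}; the regularity $u\in H^2_N(\Omega)$ gives precisely the coefficient decay needed to justify the termwise differentiation and the trace at $y=0$, as well as the convergence of $I(I(u))=\sum c_k\lambda_k\varphi_k$ in $L^2(\Omega)$.

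Note, however, that the paper does not actually supply a proof of this proposition: it is quoted from \cite{im}, so there is no in-paper argument to compare yours against. Your write-up is essentially what one would expect such a proof to look like, and it would serve perfectly well as a self-contained justification here.
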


\paragraph{Integral representation.}
Finally, the operator $I$ can also be represented as a singular integral operator:
\begin{proposition}[\cite{im}]\label{prop:integral-rep}
Consider a smooth function $u: \Omega \to \R$. Then for all $x \in \Omega$,
$$
I(u)(x) = \int_\Omega (u(y) - u(x)) \nu (x,y) dy 
$$
where $\nu(x,y)$ is defined as follows: for all
$x,y \in \Omega$, 
\begin{equation}\label{defi:nu}
  \nu (x,y) =  \frac{\pi}2 \left( \frac{1}{1 - \cos (\pi (x-y))}
    + \frac{1}{1 - \cos (\pi (x+y))} \right).
\end{equation}
\end{proposition}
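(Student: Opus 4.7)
The plan is to combine the harmonic-extension identity $I(u)(x) = \pa_y v(x, 0)$ from Proposition~\ref{prop:dir-to-neum} with the explicit series (\ref{eq:v}) for $v$, compute $\pa_y v(x, y)$ for $y > 0$ as an integral of $u$ against an explicit Abel-type kernel, and then pass to the limit $y \to 0^+$ after exploiting the fact that $I$ annihilates constants to tame the singular integral.

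Concretely, differentiating (\ref{eq:v}) term-by-term (justified for any fixed $y > 0$ by the exponential factors $e^{-k\pi y}$) yields, for $\Omega = (0,1)$,
$$
\pa_y v(x, y) = -\pi\sqrt{2}\sum_{k\geq 1} k\, c_k\, e^{-k\pi y}\cos(k\pi x).
$$
Substituting $c_k = \sqrt{2}\int_0^1 u(z)\cos(k\pi z)\,dz$ and using the product-to-sum identity $2\cos(k\pi x)\cos(k\pi z) = \cos(k\pi(x-z)) + \cos(k\pi(x+z))$ recasts this as
$$
\pa_y v(x, y) = -\pi\int_0^1 u(z)\bigl[\Sigma_y(x-z) + \Sigma_y(x+z)\bigr]\,dz,
$$
with $\Sigma_y(\theta) := \sum_{k\geq 1} k\, e^{-k\pi y}\cos(k\pi\theta)$. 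The series $\Sigma_y(\theta)$ is the real part of $\zeta/(1-\zeta)^2$ evaluated at $\zeta = e^{-\pi y + i\pi\theta}$, and an elementary computation produces a closed-form rational expression whose pointwise limit, for $\theta \notin 2\mathbb{Z}$, is
$$
\Sigma_0(\theta) = -\frac{1}{2\bigl(1-\cos(\pi\theta)\bigr)}.
$$

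Since $\lambda_0 = 0$, the spectral definition (\ref{eq:Idef}) yields $I(1) = 0$; applying the displayed formula to $u \equiv 1$ therefore forces $\int_0^1[\Sigma_y(x-z) + \Sigma_y(x+z)]\,dz = 0$ for every $y > 0$ and $x \in \Omega$. I may consequently replace $u(z)$ by $u(z) - u(x)$ in the integrand at no cost. For smooth $u$ the new factor vanishes linearly at $z = x$, while the closed-form expression for $\Sigma_y$ gives a bound $|\Sigma_y(\theta)| \le C/(1-\cos(\pi\theta))$ uniform in $y \in (0,1]$; moreover, the second kernel $\Sigma_y(x+z)$ is nonsingular on $(0,1)$ since $x+z \in (0,2)$ avoids $2\mathbb{Z}$ whenever $x, z \in (0,1)$. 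The integrand is thus dominated by an integrable function uniformly in $y$, and dominated convergence gives
$$
I(u)(x) = -\pi\int_0^1\bigl(u(z)-u(x)\bigr)\bigl[\Sigma_0(x-z)+\Sigma_0(x+z)\bigr]\,dz = \int_\Omega\bigl(u(y)-u(x)\bigr)\,\nu(x,y)\,dy,
$$
with $\nu$ as defined in (\ref{defi:nu}).

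The main difficulty is analytic rather than algebraic: the limit kernel $\Sigma_0$ has a nonintegrable singularity at $\theta = 0$, so one cannot take the limit $y \to 0^+$ inside the representation of $\pa_y v(x, y)$ directly. Exploiting the constant-annihilation property of $I$ at the kernel level, together with a uniform-in-$y$ upper bound for $\Sigma_y$ rather than merely its pointwise limit, is what converts the formal identity into a convergent, principal-value-type singular integral.
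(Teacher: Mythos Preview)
The paper does not prove this statement here; it is quoted from \cite{im} without argument. Your route---differentiating the explicit extension (\ref{eq:v}), summing $\sum_{k\ge1} k\,e^{-k\pi y}\cos(k\pi\theta)$ as $\mathrm{Re}\,\zeta/(1-\zeta)^2$, and using $I(1)=0$ to subtract $u(x)$---is the natural one, and the identification of the limiting kernel with $\nu$ is correct.

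There is, however, a genuine gap in the passage to the limit $y\to0^+$. You assert that after subtracting $u(x)$ the integrand is dominated by an integrable function uniformly in $y$, but your own bound gives only
\[
|u(z)-u(x)|\cdot\frac{C}{1-\cos(\pi(x-z))}\;\sim\;\frac{C'}{|z-x|}\qquad\text{as }z\to x,
\]
since $u(z)-u(x)$ vanishes only \emph{linearly} while $1-\cos(\pi(x-z))$ vanishes quadratically. This is not integrable, so dominated convergence does not apply as stated, and indeed the integral $\int_\Omega(u(y)-u(x))\nu(x,y)\,dy$ in the proposition is only a principal value for generic smooth $u$ with $u'(x)\neq0$. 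The remedy is to use one more order of cancellation: on a symmetric interval $(x-\delta,x+\delta)\subset\Omega$ the evenness of $\theta\mapsto\Sigma_y(\theta)$ annihilates the contribution of the linear term $u'(x)(z-x)$ for every $y>0$, and the Taylor remainder $u(z)-u(x)-u'(x)(z-x)=O((z-x)^2)$ then yields a bona fide integrable dominator; outside that interval both kernels are bounded uniformly in $y$. With this additional step your argument is complete and also makes precise the ``principal-value-type'' remark you make at the end.
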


\subsection{Functional equalities and inequalities}

\paragraph{Equalities.} The semi-norms
$||\cdot||_{\dot{H}^{\frac12}(\Omega)}$, $||\cdot||_{\dot{H}^{1}(\Omega)}$, $||\cdot
||_{\dot{H}^{\frac32}_N(\Omega)}$ and $||\cdot
||_{\dot{H}^{2}_N(\Omega)}$ are related to the operator $I$ by
equalities which will be used repeatedly.
\begin{proposition}[The operator $I$ and several semi-norms -- \cite{im}]
\label{prop:semi-norms}
\item For all $u\in H^{\frac12}(\Omega)$, we have 
$$
-\int u I(u)\, dx = \frac12 \int_\Omega \int_\Omega (u(x)-u(y))^2 \nu (x,y) dx dy
 = ||u||^2_{\dot{H}^{\frac12}(\Omega)}
$$
where $\nu$ is defined in \eqref{defi:nu}. 
\item For all $u\in H^2_N(\Omega)$, we have
$$ 
- \int_\Omega u_x  I(u)_x\, dx = ||u||_{\dot{H}^{\frac32}_N(\Omega)}^2. 
$$
\item For all $k \in \N$ and $u\in H^{k+1}_N(\Omega)$, we have
\begin{equation}\label{eq:semi-norm}
 \int_\Omega  (\pa_x^k I (u))^2 \, dx  = \|u\|_{\dot{H}^{k+1}_N (\Omega)}^2.
\end{equation}
\end{proposition}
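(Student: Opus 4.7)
The plan is to reduce all three equalities to computations in the spectral basis $\{\varphi_k\}$ from \eqref{eq:Idef}, and then to recover the integral representation in (1) via the symmetry of $\nu$.

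First, for (1), I would write $u=\sum_{k\geq 0} c_k \varphi_k$ and use the spectral definition of $I$ together with the orthonormality of the $\varphi_k$ in $L^2(\Omega)$ to get
$$
-\intom u\, I(u)\, dx = \sum_{k\geq 0} c_k^2 \lambda_k^{1/2} = \|u\|^2_{\dot H^{1/2}_N(\Omega)}.
$$
Since $H^{1/2}_N(\Omega)=H^{1/2}(\Omega)$ with equivalent norms (as recalled in Section \ref{sec:pre}), this yields the first equality. For the double integral representation, I would invoke Proposition~\ref{prop:integral-rep} for smooth $u$ to write
$$
-\intom u(x) I(u)(x)\,dx = \intom\intom u(x)\bigl(u(x)-u(y)\bigr)\nu(x,y)\,dy\,dx,
$$
and then exploit the manifest symmetry $\nu(x,y)=\nu(y,x)$ of formula \eqref{defi:nu}: swapping $x\leftrightarrow y$ in the right-hand side and averaging the two expressions produces $\tfrac12\iint(u(x)-u(y))^2\nu(x,y)\,dx\,dy$. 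Density of smooth functions extends the identity to all of $H^{1/2}(\Omega)$.

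Next, for (2) and (3) I would prove the general identity first, since (2) is just the $k=1$ case. The key elementary fact is that from $-\varphi_j''=\lambda_j\varphi_j$ one obtains
$$
\varphi_j^{(2m)} = (-1)^m \lambda_j^m \varphi_j, \qquad \varphi_j^{(2m+1)} = (-1)^m \lambda_j^m \varphi_j',
$$
so all \emph{odd} derivatives of $\varphi_j$ vanish on $\partial\Omega$ by the Neumann condition. I would then do $k$ successive integrations by parts on $\int_\Omega \varphi_j^{(k)}\varphi_l^{(k)}\,dx$: at each step one of the two factors carries an odd-order derivative on the boundary, hence the boundary contributions vanish, leaving
$$
\intom \varphi_j^{(k)}\varphi_l^{(k)}\,dx = (-1)^k\intom \varphi_j\, \varphi_l^{(2k)}\,dx = \lambda_j^k\,\delta_{jl}.
$$
Applying this to $\partial_x^k I(u) = -\sum_j c_j \lambda_j^{1/2}\varphi_j^{(k)}$ gives
$$
\intom(\partial_x^k I(u))^2\,dx = \sum_j c_j^2 \lambda_j^{k+1} = \|u\|^2_{\dot H^{k+1}_N(\Omega)},
$$
which is \eqref{eq:semi-norm}; the $k=1$ instance is exactly statement (2).

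The only delicate point is justifying the boundary-term cancellation and the termwise manipulation of the series when $u$ merely lies in $H^{k+1}_N(\Omega)$ rather than being a finite linear combination of the $\varphi_j$. The cleanest route is to first prove everything for $u$ in the dense subspace $\mathrm{span}\{\varphi_0,\dots,\varphi_N\}$ (where all steps are finite sums and the integration by parts is classical because the odd derivatives vanish exactly at the endpoints), and then pass to the limit using continuity of both sides in the $\dot H^{k+1}_N$-norm. For (1) the analogous approximation must also be compatible with the double integral; since $(u,v)\mapsto\iint(u(x)-v(y))^2\nu(x,y)\,dx\,dy$ is continuous on $H^{1/2}(\Omega)\times H^{1/2}(\Omega)$ by the already-established spectral identity, this extension is automatic.
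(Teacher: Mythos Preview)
The paper does not give its own proof here; the proposition is simply cited from \cite{im}. So there is no detailed argument in the paper to compare against, and your spectral-basis strategy is exactly the natural one.

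That said, there is a genuine slip in your write-up: statement (2) is \emph{not} the $k=1$ instance of statement (3). Taking $k=1$ in \eqref{eq:semi-norm} gives
\[
\intom (I(u)_x)^2\,dx=\sum_j c_j^2\lambda_j^2=\|u\|_{\dot H^2_N(\Omega)}^2,
\]
whereas (2) asserts
\[
-\intom u_x\, I(u)_x\,dx=\sum_j c_j^2\lambda_j^{3/2}=\|u\|_{\dot H^{3/2}_N(\Omega)}^2.
\]
These are different quantities (one power of $\lambda_j^{1/2}$ apart), so you cannot recover (2) from (3) by specializing $k$. Your orthogonality lemma $\intom \varphi_j^{(k)}\varphi_l^{(k)}\,dx=\lambda_j^{k}\delta_{jl}$ is correct and is exactly what is needed, but you must apply it to (2) separately: write $u_x=\sum_j c_j\varphi_j'$ and $I(u)_x=-\sum_l c_l\lambda_l^{1/2}\varphi_l'$, use the $k=1$ case of the lemma, and read off $\sum_j c_j^2\lambda_j^{1/2}\cdot\lambda_j$. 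With that correction (and fixing the typo $(u(x)-v(y))^2$ in your last paragraph, which should be $(u(x)-u(y))(v(x)-v(y))$ if you want a genuine bilinear form), the argument goes through.
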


\paragraph{Inequalities.}
First, we recall the following Nash inequality:
$$
\|u\|_{L^2 (\Omega)} \le C \|u\|^{\frac13}_{H^1 (\Omega)}
\|u\|^{\frac23}_{L^1 (\Omega)}.
$$
It implies in particular that,
\begin{equation}\label{eq:nash}
\|u\|^2_{L^2 (\Omega)} \le \frac{1}{2}\|u\|^2_{\dot{H}^1 (\Omega)}
+ C \|u\|^2_{L^1 (\Omega)}.
\end{equation}

This inequality will allow us to control the $H^1$ norm by the
energy $\mathcal{E}(u)$ and the $L^1$ norm. 
Indeed, we recall that the energy is defined by
\begin{equation}\label{def:energy}
  \mathcal{E} (u) = \frac1 2 \int_\Omega |u_x|^2 +u\,I(u)\, dx = 
  \frac12 \|u\|^2_{\dot{H}^1(\Omega)} - \frac12\|u\|^2_{\dot{H}^{\frac12} (\Omega)}.
\end{equation}
We then have:
\begin{lemma}\label{lem:h1}
  There exist  positive constants $\alpha,\beta$ such that
  for all $u \in H^1( \Omega)$,
$$
\| u \|^2_{\dot{H}^1 (\Omega)} \le \alpha \mathcal{E} (u) + \beta \|u\|^2_{L^1 (\Omega)}.
$$
\end{lemma}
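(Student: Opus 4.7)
Plan: Since $2\mathcal{E}(u) = \|u\|_{\dot{H}^1(\Omega)}^2 - \|u\|_{\dot{H}^{1/2}(\Omega)}^2$, the identity
\[
\|u\|_{\dot{H}^1(\Omega)}^2 = 2\mathcal{E}(u) + \|u\|_{\dot{H}^{1/2}(\Omega)}^2
\]
shows that the lemma will follow at once from an interpolation estimate of the form $\|u\|_{\dot{H}^{1/2}(\Omega)}^2 \le \theta\, \|u\|_{\dot{H}^1(\Omega)}^2 + C\,\|u\|_{L^1(\Omega)}^2$ with some $\theta < 1$: substituting and rearranging then produces the desired bound with $\alpha = 2/(1-\theta)$ and $\beta = C/(1-\theta)$. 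So the whole game is to establish this one-sided interpolation, with a constant strictly less than $1$ in front of $\|u\|_{\dot H^1(\Omega)}^2$.

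To prove it I would expand $u = \sum_{k \ge 0} c_k \vphi_k$ in the spectral basis and apply Cauchy--Schwarz to $\sum_{k\ge 1} c_k^2 \lambda_k^{1/2} = \sum_{k\ge 1}|c_k|\cdot |c_k|\lambda_k^{1/2}$, obtaining
\[
\|u\|_{\dot{H}^{1/2}(\Omega)}^2 \le \Bigl(\sum_{k\ge 1} c_k^2\Bigr)^{1/2}\Bigl(\sum_{k\ge 1} c_k^2 \lambda_k\Bigr)^{1/2} \le \|u\|_{L^2(\Omega)}\,\|u\|_{\dot{H}^1(\Omega)}.
\]
Young's inequality gives, for every $\eps>0$,
\[
\|u\|_{\dot{H}^{1/2}(\Omega)}^2 \le \eps\, \|u\|_{\dot{H}^1(\Omega)}^2 + \frac{1}{4\eps}\,\|u\|_{L^2(\Omega)}^2,
\]
and Nash's inequality \eqref{eq:nash} then yields
\[
\|u\|_{\dot{H}^{1/2}(\Omega)}^2 \le \Bigl(\eps + \frac{1}{8\eps}\Bigr)\|u\|_{\dot{H}^1(\Omega)}^2 + \frac{C}{4\eps}\,\|u\|_{L^1(\Omega)}^2.
\]
Choosing, say, $\eps=1/2$ makes the coefficient of $\|u\|_{\dot H^1(\Omega)}^2$ equal to $3/4<1$, and the lemma follows by the reduction above.

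The argument is essentially routine; no serious obstacle arises. The only delicate point is the necessity of keeping the $\|u\|_{\dot{H}^1(\Omega)}^2$ coefficient \emph{strictly} below~$1$ in the interpolation step, which is precisely what Young's inequality affords. This subtlety reflects the genuine fact that $\mathcal{E}$ may be negative, so the $L^1$ term on the right-hand side is truly indispensable in order to absorb the destabilizing contribution $-\|u\|_{\dot{H}^{1/2}(\Omega)}^2$ hidden in the energy.
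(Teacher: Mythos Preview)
Your proof is correct and follows essentially the same route as the paper's: start from $\|u\|_{\dot H^1}^2 = 2\mathcal E(u) + \|u\|_{\dot H^{1/2}}^2$, bound $\|u\|_{\dot H^{1/2}}^2 \le \|u\|_{L^2}\|u\|_{\dot H^1}$ (the paper obtains this via $-\int u\,I(u)\,dx \le \|u\|_{L^2}\|I(u)\|_{L^2}$ and \eqref{eq:semi-norm}, you via the spectral expansion directly---same inequality), split with Young, and absorb the $L^2$ term using Nash \eqref{eq:nash} to land on a coefficient $3/4$ in front of $\|u\|_{\dot H^1}^2$. The only cosmetic difference is that the paper takes $\eps=1/2$ from the outset in Young's inequality, whereas you keep $\eps$ free and then specialize.
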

\begin{remark}
See also Lemma~4.1 in \cite{tp07}. 
\end{remark}
\begin{proof}
We have 
$$  \|u\|^2_{\dot{H}^1(\Omega)}= 2 \mathcal{E} (u) + \|u\|^2_{\dot{H}^{\frac12} (\Omega)},$$
 and using \eqref{eq:semi-norm} with $k=0$
 and \eqref{eq:nash}, we get:
\begin{eqnarray*}
  \|u\|^2_{\dot{H}^{\frac12} (\Omega)} &=& -\int u \,I(u)\, dx \\
&\leq &  \|u\|_{L^2(\Omega)} \|I(u)\|_{L^2(\Omega)} \\
&\le&  \|u\|_{L^2(\Omega)} \|u\|_{\dot{H}^1(\Omega)}  \\
  &\le &  \frac12 \|u\|_{L^2(\Omega)}^2+\frac12 \|u\|^2_{\dot{H}^1(\Omega)}\\
 &\le& \frac34  \|u\|^2_{\dot{H}^1(\Omega)} + \frac{C}2 \|u\|^2_{L^1(\Omega)},
\end{eqnarray*}
hence the result.
\end{proof}

\section{A regularized problem}\label{sec:regularized}
We can now turn to the proof of Theorem \ref{thm:main} and
\ref{thm:second}.  As usual, we introduce the following regularized
equation:
\begin{equation}\label{eq:reg}
\left\{
\begin{array}{ll}
u_t+(f_\eps (u) (u_{xx}- I(u))_x)_x = 0 & \mbox{ for } x\in\Omega,\quad t>0\\[3pt]
u_x=0 ,\; f_\eps (u) (u_{xx} -I(u))_x=0  & \mbox{ for } x\in\pa\Omega,\quad t>0\\[3pt]
u(x,0)=u_0(x)& \mbox{ for } x\in\Omega
\end{array}
\right.
\end{equation} 
where the mobility coefficient $f(u)$ is approximated by
$$f_{\eps} (u)= \min (\max(\eps,f(|u|)),M)$$
which satisfies
$$ \eps \leq f_\eps(u)\leq M\quad \mbox{ for all $u\in\RR$} .$$
Ultimately, we will show that the solution $u$ satisfies
$$0\leq u(t,x)\leq M_0$$
for some constant $M_0$ independent of $M$, so that we do not have to
worry about $M$ (provided we take it large enough). The $\eps$ is of
course the most important parameter in the regularization since it
makes (\ref{eq:reg}) non-degenerate.

The proof of Theorem \ref{thm:main} consists of two parts: First, we
have to show that the regularized equation (\ref{eq:reg}) has a
solution (which may take negative values). Then we must pass to the
limit $\eps\to0$ and show that we obtain a non-negative solution of
\eqref{eq:0}.

In this section, we prove the first part. Namely, we prove (to be
compared with Theorem~1.1 in \cite{bf90}): 
\begin{theorem}\label{thm:existence-reg}
  Let $u_0 \in H^1 (\Omega)$. For all $T>0$ there exists a function
  $u^\eps(t,x)$ such that
$$
u^\eps \in \mathcal C(0,T;L^2(\Omega)) \cap L^\infty
(0,T; H^1 (\Omega)) \cap L^2 (0,T;H^3_N (\Omega))
$$
 such that, for all $\phi \in \dis ([0,T) \times \bar \Omega)$, 
\begin{equation}\label{eq:reg-weak}
\iint_Q u^\eps  \phi_t  + f_\eps (u^\eps) [ u^\eps_{xx}-
I(u^\eps)]_x \phi_x  \, dt\, dx + \intom u_0(x) \phi (0,x) \, dx= 0.
\end{equation}
Moreover, the function $u^\eps$ satisfies for  every $t \in
[0,T]$, 
\begin{eqnarray}
 \intom u^\eps (t,x) \, dx & = &  \intom u_0 (x) \, dx,\nonumber \\
  \mathcal{E} (u^\eps(t)) + \int_0^t \intom f_\eps (u^\eps) \big[(u^\eps_{xx} -
I(u^\eps))_x\big]^2 \, ds\, dx & \le & \mathcal{E} (u_0), \label{eq:nrj-eps}
\end{eqnarray}
and
\begin{multline}
\intom (u^\eps_x)^2 \, dx +  \int_0^t \intom
f_\eps(u^\eps)(u_{xxx}^\eps)^2\, ds\,  dx \\
\le \intom((u_0)_x)^2 \,dx + \int_0^t\int_\Omega f_\eps(u^\eps) u^\eps_{xxx}
(I(u^\eps))_x  \, ds\, dx
\label{eq:estim-h1}
\end{multline}
and 
\begin{equation}
  \intom G_\eps(u^\eps (t))\, dx+  \int_0^t \intom
  (u^\eps_{xx})^2+u^\eps_x I(u^\eps)_x \, ds dx 
  \le \intom G_\eps(u_0) \, dx.
\label{eq:ent-eps}
\end{equation}
where $G_\eps$ is a non-negative function such that $f_\eps
G_\eps'' =1$. 

Finally, $u^\eps$ is $\frac12$-H\"older continuous
  with respect to $x$ and $\frac18$-H\"older continuous with respect
  to $t$; more precisely, there exists a constant $C_0$ only depending
  on $\Omega$ and $\|u^\eps\|_{L^\infty (0,T;H^1 (\Omega))}$ and
  $\|f_\eps (u^\eps) [ u^\eps_{xx}- I(u^\eps)]_x \|_{L^2(Q)}$ such
  that
\begin{equation}\label{eq:holder}
\| u^\eps \|_{\mathcal C^{\frac12,\frac18}_{t,x} (Q)} \le C_0.
\end{equation}
\end{theorem}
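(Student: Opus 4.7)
The plan is to construct $u^\eps$ via a Faedo-Galerkin scheme in the Neumann eigenbasis $\{\vphi_k\}_{k\ge 0}$ of Section~\ref{sec:pre}. For each $N\ge 1$ I look for $u^N(t,x)=\sum_{k=0}^{N} c_k^N(t)\vphi_k(x)$ satisfying, for every $0\le j\le N$,
\[
\int_\Omega u^N_t\,\vphi_j\,dx \;+\; \int_\Omega f_\eps(u^N)\bigl(u^N_{xx}-I(u^N)\bigr)_x(\vphi_j)_x\,dx \;=\; 0,
\]
with initial coefficients given by the spectral projection of $u_0$. Since $\eps\le f_\eps\le M$, the resulting ODE system for $(c_0^N,\dots,c_N^N)$ has a locally Lipschitz right-hand side, so Cauchy-Lipschitz yields a local-in-time solution.

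The crucial structural observation is that $u^N_{xx}=-\sum c_k^N\lambda_k\vphi_k$ and $I(u^N)=-\sum c_k^N\lambda_k^{1/2}\vphi_k$ both belong to the Galerkin span and are thus admissible test functions. Testing against $-(u^N_{xx}-I(u^N))$ and invoking (\ref{def:energy}) delivers the energy identity (\ref{eq:nrj-eps}); combined with conservation of mass and Lemma~\ref{lem:h1} this gives a uniform $L^\infty(0,T;H^1)$ bound, which rules out blow-up of the ODE and extends $u^N$ to all of $[0,T]$. Testing against $-u^N_{xx}$, expanding $(u^N_{xx}-I(u^N))_x=u^N_{xxx}-I(u^N)_x$, then using Proposition~\ref{prop:semi-norms} (via $\|I(u)_x\|_{L^2}^2=\|u\|_{\dot H^{3/2}_N}^2$), the interpolation $\|u\|_{\dot H^{3/2}_N}^2\le C\|u\|_{\dot H^1}\|u\|_{\dot H^2}$ and the boundary-improved bound $\|u\|_{\dot H^2}^2\le \|u_x\|_{L^2}\|u_{xxx}\|_{L^2}$ (one integration by parts using $u_x|_{\partial\Omega}=0$), one absorbs the right-hand side of (\ref{eq:estim-h1}) and obtains a uniform $L^2(0,T;H^3_N(\Omega))$ bound.

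From these bounds the equation gives $u^N_t$ bounded in $L^2(0,T;H^{-1})$, so Aubin-Lions yields a subsequence converging strongly in $L^2(0,T;H^s)$ for every $s<1$, and weakly in the spaces above. Continuity and boundedness of $f_\eps$ together with dominated convergence handle $f_\eps(u^N)\to f_\eps(u^\eps)$, while the spectral boundedness of $I$ handles the nonlocal term; using the density of the Galerkin span in $H^1(\Omega)$ to accommodate arbitrary $\phi\in\mathcal D([0,T)\times\bar\Omega)$, one obtains (\ref{eq:reg-weak}). Lower semi-continuity transfers the energy and $H^1$ inequalities to $u^\eps$; the entropy inequality (\ref{eq:ent-eps}) is then derived directly on $u^\eps$ by testing the limit equation against $G_\eps'(u^\eps)$, which is admissible thanks to the $L^2(0,T;H^3_N)$ regularity just established, and invoking $f_\eps G_\eps''=1$ together with Proposition~\ref{prop:semi-norms} to identify the nonlocal piece of the dissipation.

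The Hölder bound (\ref{eq:holder}) follows by a Bernis-Friedman-style interpolation: $u^\eps(t,\cdot)\in H^1(\Omega)\hookrightarrow\mathcal C^{1/2}(\bar\Omega)$ gives the spatial exponent. For the temporal exponent, write $u^\eps_t=-\partial_x J^\eps$ with $J^\eps=f_\eps(u^\eps)(u^\eps_{xx}-I(u^\eps))_x\in L^2(Q)$ (which follows from (\ref{eq:nrj-eps}) and $f_\eps\le M$), test the integrated equation against a smooth bump $\phi$ of width $r$ centred at $x_0$, use the spatial $\mathcal C^{1/2}_x$ bound to replace $u^\eps(t_2,x)-u^\eps(t_1,x)$ by its value at $x_0$ up to $O(r^{1/2})$, and optimise $r\sim|t_2-t_1|^{1/4}$ to extract the exponent $1/8$. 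The subtlest point, I expect, is the rigorous derivation of the entropy inequality: because $G_\eps'(u^N)$ does not belong to the Galerkin span, one cannot obtain it at the discrete level and must work on the limit, relying on the $L^2(H^3_N)$ regularity and Proposition~\ref{prop:semi-norms} to legitimise every integration by parts, including those involving the nonlocal operator $I$.
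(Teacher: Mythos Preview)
Your Galerkin scheme is a genuinely different route from the paper's. The paper linearises the equation, replacing the coefficient $f_\eps(u)$ and the source $I(u)_x$ by $f_\eps(v)$ and $I(v)_x$ for a given $v\in L^2(0,T_*;H^2_N)$, proves existence for the resulting \emph{linear} fourth-order problem (Proposition~\ref{prop:corner}), and then applies Leray--Schauder to the map $v\mapsto u$; the a~priori estimate needed to close the fixed point is obtained by choosing $T_*$ small (depending on $\eps,M$) and then iterating in time. Your spectral Galerkin approach has the attractive feature that $u^N_{xx}$ and $I(u^N)$ lie in the span, so the energy and $\dot H^1$ identities are exact at the discrete level, and you avoid having to develop any linear theory. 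Conversely, the paper's route sidesteps the nuisance you correctly identify, namely that $G_\eps'(u^N)$ is not an admissible Galerkin test function.

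There is, however, a real gap in your chain of a~priori bounds. You write that the energy identity ``combined with conservation of mass and Lemma~\ref{lem:h1}'' yields a uniform $L^\infty(0,T;H^1)$ bound and hence global existence for the ODE. But Lemma~\ref{lem:h1} reads
\[
\|u\|_{\dot H^1}^2 \le \alpha\,\mathcal E(u)+\beta\,\|u\|_{L^1}^2,
\]
and mass conservation only controls $\int_\Omega u^N\,dx$, not $\|u^N\|_{L^1}$. Since the regularised problem does \emph{not} preserve sign (this is exactly the difficulty stressed in the introduction and in Section~\ref{sec:proof1}), you cannot convert the conserved integral into an $L^1$ bound, and the energy inequality alone gives no control on $\|u^N\|_{\dot H^1}$. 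The paper avoids this trap by closing the a~priori estimate via smallness of $T_*$ (see Lemma~\ref{lem:pf}) rather than via Lemma~\ref{lem:h1}.

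The fix in your framework is easy and already implicit in your second step: test with $-u^N_{xx}$ \emph{first}. After Young's inequality and your interpolation $\|I(u^N)_x\|_2^2\le C\|u^N_x\|_2^{3/2}\|u^N_{xxx}\|_2^{1/2}$ (which is legitimate since $u^N_x|_{\partial\Omega}=0$), absorb the $u^N_{xxx}$ contribution using $f_\eps\ge\eps$ and apply Gr\"onwall to obtain
\[
\|u^N_x(t)\|_2^2+\frac{\eps}{4}\int_0^t\|u^N_{xxx}\|_2^2\,ds\le \|(u_0)_x\|_2^2\,e^{Ct},
\]
with $C=C(\eps,M)$. This gives both the $L^\infty_tH^1_x$ and $L^2_tH^3_N$ bounds without any positivity assumption, and the rest of your argument (compactness, passage to the limit, entropy inequality on $u^\eps$, and the Bernis--Friedman H\"older interpolation) then goes through as you describe.
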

\begin{remark}
This theorem is very similar to Theorem 1.1 in \cite{bf90}, and
some steps in our proof follow along the lines of
\cite{bf90}. For instance, getting the H\"older estimates from the 
$L^\infty H^1$ estimate is done in the same way. However, the main
difficulty in proving this theorem is precisely to get the $L^\infty
H^1$ estimate; this step is not straightforward at all and this is a
significant difference with \cite{bf90}. 
\end{remark}
\begin{proof}[Proof of Theorem~\ref{thm:existence-reg}]
Theorem~\ref{thm:existence-reg} follows from a fixed point argument: 
For some $T_*$, we denote
$$
V= L^2 (0,T_*;H^2_N (\Omega))
$$  
and we define the application $\mathcal F: V\rightarrow V$ such that
for $v\in V$, $\mathcal F(v)$ is the solution $u$ of
\begin{equation}\label{eq:fp}
\left\{
\begin{array}{ll}
 u_t+ (f_\eps(v) (u_{xxx}- I(v)_x))_x = 0 & \mbox{ for } x\in\Omega,\quad t>0\\[3pt]
u_x=0 ,\; f_\eps (v) (u_{xx} -I(v))_x=0  & \mbox{ for } x\in\pa\Omega,\quad t>0\\[3pt]
u(x,0)=u_0(x)& \mbox{ for } x\in\Omega.
\end{array}
\right.
\end{equation}

The fact that $\mathcal F$ is well defined 
follows from the observation that for $v\in V$, we have
$$
a(t,x) = f_\eps (v(t,x)) \in [\eps,M] \quad \text{ and } \quad 
g(t,x) = I (v)_x \in L^2 (Q)
$$
and the following proposition:
\begin{proposition} \label{prop:corner} 
Consider $u_0 \in H^1 (\Omega)$ and
  $a(t,x) \in L^\infty (Q)$ such that $\eps \le a (t,x) \le M$ 
  a.e. in $Q$. If $g \in L^2(Q)$, then there exists a 
  function
$$
u \in \mathcal{C}(0,T;L^2 (\Omega)) \cap 
L^\infty (0,T;H^1 (\Omega)) \cap L^2(0,T;H^3_N (\Omega))
$$
 and
$$
\iint_Q \big[ u \phi_t + a (u_{xxx}-g) \phi_x \big] \, dt \,dx + \intom u_0
(x) \phi (0,x) \, dx= 0
$$
for all $\phi \in \dis ([0,T)\times \bar \Omega)$. Moreover, for every $t \in
[0,T]$, $u$ satisfies
$$
\intom u(t,x) \, dx = \intom u_0 (x) \, dx \\
$$
and 
\begin{eqnarray}
  \intom (u_x)^2(t)\, dx + \frac12 \int_0^t
  \intom a (u_{xxx})^2 \, ds\,  dx \le \frac{M}2 \int_0^t \intom g^2
  \, ds\,  dx +\intom (u_0)_x^2 \, dx .  \label{eq:nrj-corner}
\end{eqnarray}
Furthermore, $u$ is $\frac12$-H\"older continuous with
  respect to $x$ and $\frac18$-H\"older continuous with respect to
  $t$; more precisely, there exists a constant $C_0$ only depending on
  $\Omega$ and $\|u\|_{L^\infty (0,T;H^1 (\Omega))}$ and
  $\|u_{xxx}-g\|_{L^2(Q)}$ such that
\begin{equation}\label{eq:holder-corner}
\| u \|_{\mathcal C^{\frac12,\frac18}_{t,x} (Q)} \le C_0.
\end{equation}
\end{proposition}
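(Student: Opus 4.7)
My plan is to construct $u$ by a Galerkin scheme based on the Neumann eigenbasis $\{\varphi_k\}$ of Section~\ref{sec:pre}; this basis is particularly convenient because each $\varphi_k$ satisfies $\varphi_{k,x}=0$ (and in fact $\varphi_{k,xxx}=0$) on $\partial\Omega$, so finite linear combinations $u_N(t,x)=\sum_{k=0}^N c_k(t)\varphi_k(x)$ automatically live in $H^3_N(\Omega)$ and the boundary terms appearing in integrations by parts vanish painlessly. Using test functions of the form $\phi(t,x)=\psi(t)\varphi_j(x)$ in \eqref{eq:reg-weak}, the coefficients $c_k$ satisfy the linear ODE system
$$
\dot c_j(t)=-\sum_{k=0}^N\lambda_k c_k(t)\int_\Omega a(t,\cdot)\,\varphi_{k,x}\varphi_{j,x}\,dx-\int_\Omega a(t,\cdot)\,g(t,\cdot)\,\varphi_{j,x}\,dx,
$$
with bounded measurable coefficients in $t$, which admits a unique absolutely continuous solution on $[0,T]$ by Carath\'eodory theory, with initial data $c_j(0)=\int_\Omega u_0\varphi_j$.

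The a priori estimates come in two flavors. First, testing against $\varphi_0\equiv 1$ yields $\tfrac{d}{dt}\intom u_N\,dx=0$, that is, conservation of mass. Second, testing against $-u_{N,xx}=\sum_{k\ge 1}\lambda_k c_k\varphi_k$, which still belongs to $\mathrm{span}(\varphi_0,\dots,\varphi_N)$, produces
$$
\tfrac12\tfrac{d}{dt}\intom u_{N,x}^2\,dx=-\intom a\,u_{N,xxx}^2\,dx+\intom a\,g\,u_{N,xxx}\,dx,
$$
and Young's inequality together with $a\le M$ absorbs the last term, giving exactly \eqref{eq:nrj-corner}. Since $a\ge\eps$, this yields uniform bounds on $u_N$ in $L^\infty(0,T;H^1)\cap L^2(0,T;H^3_N)$, and the equation itself then bounds $\partial_t u_N$ in $L^2(0,T;H^{-1})$. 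Weak-$*$ and weak compactness, combined with an Aubin--Lions argument, yield a subsequence converging in $\mathcal C(0,T;L^2)$ to a limit $u$ satisfying the claimed regularity; the weak formulation passes to the limit by density of $\bigcup_N\mathrm{span}(\varphi_0,\dots,\varphi_N)$ in the space of test functions, and \eqref{eq:nrj-corner} and the mass conservation are preserved by weak lower semicontinuity.

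For the H\"older estimate~\eqref{eq:holder-corner} the spatial $\tfrac12$-exponent is immediate from $H^1(\Omega)\hookrightarrow\mathcal C^{1/2}(\bar\Omega)$ applied to the uniform $L^\infty(0,T;H^1)$ bound on $u$. For the $\tfrac18$-exponent in time I follow the classical argument of Bernis--Friedman~\cite{bf90}: for $y\in\bar\Omega$ and $\delta>0$, pick a smooth mass-one bump $\rho_\delta$ supported in $B(y,\delta)\cap\Omega$ (one-sided near the boundary) with $\|\rho_{\delta,x}\|_{L^2}\le C\delta^{-3/2}$, and split
$$
u(t_2,y)-u(t_1,y)=\bigl(u(t_2,y)-\langle u(t_2),\rho_\delta\rangle\bigr)+\langle u(t_2)-u(t_1),\rho_\delta\rangle+\bigl(\langle u(t_1),\rho_\delta\rangle-u(t_1,y)\bigr).
$$
The outer brackets are $O(\delta^{1/2})$ by the spatial H\"older estimate, while setting $q=a(u_{xxx}-g)\in L^2(Q)$ and using $u_t=-q_x$ gives $\langle u(t_2)-u(t_1),\rho_\delta\rangle=\int_{t_1}^{t_2}\!\!\intom q\,\rho_{\delta,x}\,dx\,ds$, bounded by $C|t_2-t_1|^{1/2}\delta^{-3/2}$; optimizing $\delta^{1/2}+|t_2-t_1|^{1/2}\delta^{-3/2}$ at $\delta=|t_2-t_1|^{1/4}$ gives the $\tfrac18$-H\"older bound.

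The main obstacle is not really the estimates themselves, which are essentially one-dimensional linear nondegenerate parabolic analysis, but the careful bookkeeping of boundary terms: the weak formulation \eqref{eq:reg-weak} is tested against $\phi\in\dis([0,T)\times\bar\Omega)$ that need \emph{not} vanish on $\partial\Omega$, so the natural boundary condition $a(u_{xxx}-g)=0$ on $\partial\Omega$ must be recovered in the limit. This is why choosing the Neumann eigenbasis is essential --- it already encodes $u_x=0$ and even $u_{xxx}=0$ on $\partial\Omega$ for the approximations, so that testing against $-u_{N,xx}$ is legitimate, no uncontrolled boundary flux appears in the energy identity, and the limit $u$ automatically inherits $u\in L^2(0,T;H^3_N)$ together with the natural boundary condition weakly encoded in \eqref{eq:reg-weak}.
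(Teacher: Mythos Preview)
Your proof is correct, but it follows a different construction from the paper's. The paper proceeds by \emph{time discretization} (Rothe's method): it first solves the stationary implicit-Euler step
\[
\frac{v-h}{\tau} + (a\,v_{xxx})_x = (a\,g)_x
\]
via Lax--Milgram on $V_0=H^1\cap H^3_N$ (after a reformulation that replaces the test function $\phi$ by $-\psi_{xx}+\intom\psi$ so as to obtain a coercive bilinear form), then iterates to build a piecewise-constant-in-time approximation $u^\tau$, and finally passes to the limit $\tau\to0$ using Simon's compactness theorem. Your approach instead discretizes in \emph{space} by a Galerkin scheme on the Neumann eigenbasis, solves the resulting linear ODE system by Carath\'eodory theory, and passes to the limit via Aubin--Lions. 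Both routes deliver the same a~priori estimate~\eqref{eq:nrj-corner} (yours as an exact energy identity at the Galerkin level, the paper's via the discrete estimate \eqref{estim-stat} summed over time steps), and both finish the H\"older regularity by invoking the Bernis--Friedman argument \cite[Lemma~2.1]{bf90}. Your choice of the cosine basis is particularly efficient here because $\varphi_{k,x}=\varphi_{k,xxx}=0$ on $\partial\Omega$ and $-\varphi_{k,xx}=\lambda_k\varphi_k$ stays in the span, so the boundary terms and the test function $-u_{N,xx}$ cause no trouble; the paper's method, by contrast, does not rely on any explicit basis and would transfer more readily to settings where one is unavailable.
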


This proposition is a very natural existence result for the fourth order
linear parabolic equation
$$ 
u_t + (a u_{xxx})_x = (ag)_x.
$$
Its proof is fairly classical, we give some details in
Appendix~\ref{app:corner-proof} for the interested reader.
\vspace{20pt}

Next, we show the following result:
\begin{lemma}\label{lem:pf}
  There exists a (small) time $T_*>0$, depending only on $\eps$, $M$ and
  $\Omega$, such that $\F$ has a fixed point $u$ in $V= L^2
  (0,T_*;H^2_N (\Omega))$ for any initial data $u_0\in H^1(\Omega)$.
  Furthermore, $u$ satisfies
$$\|u \|_V \le R ||u_0||_{\dot{H}^1(\Omega)}$$
and 
\begin{equation}\label{eq:propa}
\|u \|_{L^\infty(0,T_*;\dot{H}^1(\Omega))} \leq  \sqrt{2} ||u_0||_{\dot{H}^1(\Omega)} .
\end{equation}
\end{lemma}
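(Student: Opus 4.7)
The plan is to apply Schauder's fixed point theorem to $\mathcal F$ on a closed convex bounded subset of $V$. Define
$$
\mathcal B = \{ v \in V \,:\, \|v\|_V \le R\|u_0\|_{\dot{H}^1(\Omega)} \text{ and } \|v\|_{L^\infty(0,T_*;\dot{H}^1(\Omega))} \le \sqrt 2 \|u_0\|_{\dot{H}^1(\Omega)} \}
$$
with constants $R$ and $T_*$ to be chosen. For $v \in V$, set $u = \mathcal F(v)$. Applying Proposition~\ref{prop:corner} with $a(t,x) = f_\eps(v(t,x)) \in [\eps,M]$ and $g(t,x) = I(v)_x$, and using the identity $\|I(v)_x\|_{L^2}^2 = \|v\|^2_{\dot{H}^2_N(\Omega)}$ coming from~(\ref{eq:semi-norm}) with $k=1$, I obtain
$$
\|u_x(t)\|_{L^2}^2 + \frac{\eps}{2}\int_0^t \|u_{xxx}(s)\|_{L^2}^2\,ds \le \frac{M}{2}\int_0^t \|v(s)\|^2_{\dot{H}^2_N}\,ds + \|u_0\|^2_{\dot{H}^1}.
$$

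\textbf{Self-map property.} For $v \in \mathcal B$ the right-hand side is bounded by $(MR^2/2 + 1)\|u_0\|^2_{\dot{H}^1}$, so choosing $R^2 \le 2/M$ immediately yields $\|u\|_{L^\infty(0,T_*;\dot{H}^1)} \le \sqrt 2\|u_0\|_{\dot{H}^1}$, which is exactly~(\ref{eq:propa}). For the $V$-bound I would use the Gagliardo--Nirenberg interpolation $\|u\|_{\dot{H}^2_N} \le C\|u\|^{1/2}_{\dot{H}^1}\|u\|^{1/2}_{\dot{H}^3_N}$, the identity $\|u\|_{\dot{H}^3_N} = \|u_{xxx}\|_{L^2}$ from~(\ref{eq:semi-norm}) with $k=2$, and conservation of mass combined with Poincaré--Wirtinger to control the $L^1$ piece of $\|u\|_{H^2_N}$ by $\|u_0\|_{L^1} + \|u\|_{\dot{H}^1}$. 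Integrating in time and applying Cauchy--Schwarz yields an estimate of the form
$$
\|u\|_V \le C(\Omega,\|u_0\|_{L^1})\, T_*^{1/4}\,\eps^{-1/2}\,\|u_0\|_{\dot{H}^1},
$$
so that $T_* = T_*(\eps,M,\Omega)$ small enough (with $R \le \sqrt{2/M}$) closes the ball: $\|u\|_V \le R\|u_0\|_{\dot{H}^1}$.

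\textbf{Continuity and compactness.} The map $\mathcal F$ is continuous on $V$: if $v_n \to v$ in $V$, then $f_\eps(v_n) \to f_\eps(v)$ a.e. while remaining bounded by $M$, and $I(v_n)_x \to I(v)_x$ in $L^2(Q)$; applying the linear parabolic energy estimate of Proposition~\ref{prop:corner} to the difference $\mathcal F(v_n) - \mathcal F(v)$ then gives convergence in $V$. For compactness I combine the uniform $L^2(0,T_*;H^3_N)$ bound on $\mathcal F(\mathcal B)$ with the H\"older estimate~(\ref{eq:holder-corner}) (which provides equicontinuity in time with values in $C(\bar\Omega)$) and invoke an Aubin--Lions type argument to conclude that $\mathcal F(\mathcal B)$ is relatively compact in $V = L^2(0,T_*;H^2_N)$. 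Schauder's theorem then produces a fixed point satisfying both asserted bounds.

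The main obstacle is the joint choice of $R$ and $T_*$ in the self-map step: one must balance the factor $M$ from the linearized energy estimate (which forces $R^2 \lesssim 1/M$) against the factor $1/\eps$ that appears when passing from the $\dot{H}^3$ dissipation to an $L^2 H^2$ bound, so that a $T_*$ depending only on $\eps$, $M$ and $\Omega$ suffices to control $\|\mathcal F(v)\|_V$ by $R\|u_0\|_{\dot{H}^1}$. The precise powers of $T_*$, $\eps$ and $R$ are routine once the interpolation step above is set up.
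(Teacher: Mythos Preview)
Your argument is essentially correct, but it follows a different route from the paper. The paper applies the \emph{Leray--Schauder} fixed point theorem rather than Schauder on an invariant ball: it checks that $\mathcal F$ is continuous and compact on all of $V$ (compactness via Aubin's lemma using the uniform $L^2(0,T_*;H^3_N)$ bound together with $\partial_t u_n$ bounded in $L^2(0,T_*;H^{-1})$), and then proves an a~priori bound for any $u$ with $u=\sigma\mathcal F(u)$, $\sigma\in[0,1]$. The point of Leray--Schauder is that in this a~priori step the \emph{same} function $u$ appears on both sides of the energy inequality~\eqref{eq:nrj-corner}; the paper then interpolates the destabilizing term via
\[
\|(I(u))_x\|_{L^2}^2 \le 2C_\eps\|u_x\|_{L^2}^2 + \tfrac{\eps}{2M}\|u_{xxx}\|_{L^2}^2,
\]
absorbs the $\dot H^3$ piece into the dissipation, and chooses $T_*=(2MC_\eps)^{-1}$ to absorb the remaining $T_*\sup_t\|u_x\|^2$ term. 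This avoids entirely the joint tuning of $R$ and $T_*$ that you identify as the main obstacle. Your approach trades this for a more elementary fixed point theorem, at the cost of having $v$ on the right and $u$ on the left, which forces the two-step ``$R^2\le 2/M$ then $T_*$ small'' balancing.

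Two minor remarks. First, your continuity argument (applying the energy estimate to the difference) is more direct than the paper's, which instead extracts a convergent subsequence via compactness and identifies the limit using uniqueness for the linear problem. Second, the ball $\mathcal B$ as you wrote it degenerates when $\|u_0\|_{\dot H^1}=0$ (constant data), since the $L^1$-part of the $H^2_N$ norm equals $\|u_0\|_{L^1}$ and is not controlled by $\|u_0\|_{\dot H^1}$; the lemma statement itself is a bit loose on this point (the paper's own proof only concludes $\|u\|_V\le R$ with $R$ depending on $\eps$ and $u_0$), so for the Schauder route you should take the radius of $\mathcal B$ to be $R(\|u_0\|_{\dot H^1}+\|u_0\|_{L^1})$ or simply an absolute constant.
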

Before proving this lemma, let us complete the proof of
Theorem~\ref{thm:existence-reg}.

\paragraph{Construction of a solution for large times.}
Lemma \ref{lem:pf} gives the existence of a solution $u^\eps_1$ of
\eqref{eq:reg} defined for $t\in [0,T_*]$.  Since $T_*$ does not
depend on the initial condition, we can apply Lemma \ref{lem:pf} to
construct a solution $u^\eps_2$ in $[T_*,2T_*]$ with initial condition
$u^\eps_1(T_*,x)$ which is $H^1 (\Omega)$ by \eqref{eq:propa}. This
way, we obtain a solution $u^\eps$ of \eqref{eq:reg} on the time
interval $[0,2T_*]$.  Note that we also have
$$
\|u^\eps \|_{L^\infty(0,2T_*;\dot{H}^1(\Omega))} \leq \sqrt{2}^2
||u_0||_{\dot{H}^1(\Omega)} .
$$
Iterating this argument, we construct a solution $u^\eps$ on any interval
$[0,T]$ satisfying in particular, for all $k \in \N$ such that $kT_* \le
T$,
$$ 
||u^\eps||_{L^\infty (0,kT_*,\dot{H}^1(\Omega))}\leq \sqrt{2}^k R
||u_0||_{\dot{H}^1(\Omega)}.
$$

\paragraph{Energy and entropy estimates.}
The conservation of mass follows from Proposition~\ref{prop:corner},
but we need to explain how to derive \eqref{eq:nrj-eps},
\eqref{eq:estim-h1} and \eqref{eq:ent-eps} from \eqref{eq:reg-weak}.
Formally, one has to choose successively $\phi=
  -u^\eps_{xx}+I(u^\eps)$, $\phi=-u^\eps_{xx}$ and $\phi=G'_\eps
  (u^\eps)$. Making such a formal computation rigourous is quite
  standard;  details are given in
  Appendix~\ref{app:nrj-entropy} for the reader's convenience.
Finally, (\ref{eq:holder}) follows from (\ref{eq:holder-corner}).
\end{proof}

\begin{proof}[Proof of Lemma \ref{lem:pf}]
  We need to check that the conditions of Leray-Schauder's fixed point
  theorem are satisfied:

\paragraph{$\F$ is compact.}
Let $(v_n)_n$ be a bounded sequence in $V$ and let $u_n$ denote
$\F(v_n)$.  The sequence $(I (v_n))_x$ is bounded in $L^2 (Q)$, and so
$$
g_n = f_\eps (v_n) \partial_x I (v_n) \text{ is bounded in } L^2 (Q).
$$
Estimate~\eqref{eq:nrj-corner} implies that $u_n$ is bounded in
$L^2(0,T_*;H^3_N(\Omega))$.  In particular $\pa_{xxx} u_n$ is bounded
in $L^2 (0,T_*;L^2(\Omega))$ and Equation \eqref{eq:fp} implies that
$\pa_t(u_n)$ is bounded in $L^2(0,T_*,H^{-1}(\Omega))$.  Using Aubin's
lemma, we deduce that $(u_n)_n$ is pre-compact in
$V=L^2(0,T_*;H^2_N(\Omega))$.

\paragraph{$\F$ is continuous.}
Consider now a sequence $(v_n)_n$ in $V$ such that $v_n \to v$ in $V$
and let $u_n=\F(v_n)$.  We have in particular $v_n \to v$ in $L^2 (Q)$
and, up to a subsequence, we can assume that $v_n \to v$ almost
everywhere in $Q$.  Hence, $f_\eps (v_n) \to f_\eps (v)$ almost
everywhere in $Q$.  We also have that $(I (v_n))_x $ converges to $(I
(v))_x$ in $L^2 (Q)$, and since $|f_\eps(v_n)| \leq M$ a.e., we can
show that
$$
g_n = f_\eps (v_n) \partial_x I (v_n) \to f_\eps (v) \partial_x I (v)
= g \quad \text{ in } L^2 (Q).
$$

Next, the compacity of $\F$ implies that $(u_n)_n$ is pre-compact in
the space $L^2(0,T;H^2_N(\Omega))$, and so $u_n$ converges (up to a
subsequence) to $U$ in $V$. In particular, $u_n \to U$ in $L^2 (Q)$
and (up to a another subsequence), $u_n \to U$ almost everywhere in
$Q$.  We thus have $f_\eps (u_n) \to f_\eps (U)$ in $L^2(Q)$, and
passing to the limit in the equation, we conclude that $U= u=\F(v)$
(by the uniqueness result in Proposition~\ref{prop:corner}).  Since
this holds for any subsequence of $u_n$, we deduce that the whole
sequence $u_n$ converges to $u$ hence
$$\F(v_n) \to \F(v) \mbox{  in $V$ as $n \to \infty$}$$
and $\F$ is continuous.

\paragraph{A priori estimates.}
It only remains to show that there exists a constant $R>0$ such that
for all functions $u \in V$ and $\sigma \in [0,1]$ such that $u =
\sigma \F(u)$, we have
$$
\|u \|_V \le R.
$$ 
This is where the smallness of $T_*$ will be needed.

Using \eqref{eq:nrj-corner}, we see that 
\begin{multline}\label{estim1}
 \intom (u_x(t))^2 \, dx + \frac\eps2 \int_0^{T_*} \intom  (u_{xxx})^2  \,
dx\,dt \\ \le  \frac{M}2 \int_0^{T_*} \int_\Omega((I(u)_x)^2 \, dx\,dt +  \intom (u_0)_x^2 \, dx,
\end{multline}
and using \eqref{eq:semi-norm}
and the interpolation inequality
\begin{equation}\label{eq:interpol}
\|u_x\|_{L^2(\Omega)} \le C \|u\|_{L^2 (\Omega)}^{\frac12} \| u_{xx}\|_{L^2(\Omega)}^{\frac12},
\end{equation}
 we get:
\begin{eqnarray}
  \nonumber  \int_0^{T_*}\int_\Omega ((I(u)_x)^2  \, dx \, dt & \le &  
  2C_\eps \int_0^{T_*} \|I(u)(t)\|^2_2 \, dt + \frac{\eps}{2M} \int_0^{T_*}
  \|(I(u))_{xx}(t)\|^2_2 \, dt \\
\nonumber   & \le &  
  2C_\eps \int_0^{T_*}  \|u_x(t)\|^2_2 \, dt + \frac{\eps}{2M} \int_0^{T_*}
  \|u_{xxx}(t)\|^2_2 \, dt \\
  \label{estim2}  & \le & 2C_\eps {T_*} \hspace{-3pt} \sup_{t \in [0,{T_*}]} \|u_x(t)\|^2_2 
  +  \frac{\eps}{2M} \int_0^{T_*} \hspace{-1pt} \|u_{xxx}(t)\|^2_2 \, dt
\end{eqnarray}
where $C_\eps$ only depends on the constant $C$ in~\eqref{eq:interpol} and the parameters $\eps$
and $M$. Combining \eqref{estim1} and \eqref{estim2}, we conclude that
$$
(1 - MC_\eps {T_*}) \sup_{t \in [0,T]} \|u_x(t)\|^2_2 + \frac\eps4 \int_0^{T_*}
  \|u_{xxx}(t)\|^2_2 \, dt \le  \intom (u_0)_x^2 \, dx.
$$
Therefore, choosing $ T_*:= \frac1{2MC_\eps}$, we get the following estimates
$$
\|u\|_{L^\infty(0,{T_*};\dot{H}^1(\Omega))} \le \sqrt{2} || u_0||_{\dot{H}^1(\Omega)}\quad \mbox { and }
\|u\|_{L^2(0,{T_*};\dot{H}^3_N(\Omega))} \le \frac{2}{\sqrt{\eps}} || u_0||_{\dot{H}^1(\Omega)}
$$
Since we also have 
$$\intom u(t) \, dx = \intom u_0 \, dx,$$
we deduce that $\|u \|_V \le R$ for some constant $R$ depending on $\eps$,
which completes the proof.
\end{proof}

\section{Proof of Theorem~\ref{thm:main}}
\label{sec:proof1}

As pointed out in the introduction, one of the main difficulties in the
proof of Theorem~\ref{thm:main} is that the natural energy estimate
(\ref{eq:nrj-corner}) does not give any information by itself, since
$\E(u)$ may be negative. Even if Lemma \ref{lem:h1} implies that the
quantity
$$
\alpha \E(u^\eps)+\beta||u^\eps||_{L^1(\Omega)} 
$$
is bounded below by the $H^1$ norm of $u$, the mass conservation only
allows us to control the $L^1$ norm of $u^\eps$ if we know that
$u^\eps$ is non-negative.  Unfortunately, it is well known that
equation (\ref{eq:reg}) does not satisfy the maximum principle, and
that the existence of non-negative solutions of (\ref{eq:0}) is
precisely a consequence of the degeneracy of the diffusion
coefficient, so that while we can hope (and we will prove) to have
$\lim_{\eps\to0}u^\eps\geq 0$ we do not have, in general, that
$u^\eps\geq 0$.  Lemma \ref{lem:add} below will show that it is
nevertheless possible to derive some a priori estimates that are
enough to pass to the limit, provided the initial entropy is finite
(\ref{eq:entropy_init}).

\begin{proof}[Proof of Theorem~\ref{thm:main}]
  Consider the solution $u^\eps$ of (\ref{eq:reg}) given by
  Theorem~\ref{thm:existence-reg}.  In order to prove Theorem
  \ref{thm:main}, we need to show that $\lim_{\eps\to0}u^\eps$ exists and
  solves (\ref{eq:weak}).

Since we cannot use the energy inequality to get the necessary
estimates on $u^\eps$, we will need  the following lemma:
\begin{lemma}\label{lem:add}
Let $H_\eps$ denote the following functional:
$$
H_\eps (v) = \intom [v_x^2  + 2 M G_\eps (v) ] \, dx.
$$
Then the solution $u^\eps$ given by Theorem~\ref{thm:existence-reg} satisfies
$$
H_\eps (u^\eps(t)) + \frac M 2 \int_0^t \intom (u^\eps_{xx})^2 \, dx\, ds +  \frac12 \int_0^t \intom
f_\eps(u^\eps)(u_{xxx}^\eps)^2 \, dx\, ds  \le H_\eps (u_0) e^{t/2}
$$
for every $t \in [0,T]$.
\end{lemma}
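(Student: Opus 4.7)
}
The functional $H_\eps$ is built to combine the $H^1$-type estimate \eqref{eq:estim-h1} with $2M$ times the entropy estimate \eqref{eq:ent-eps}. Neither estimate is usable on its own because the nonlocal term $I(u^\eps)$ appears on the right-hand side of both with a destabilizing sign: in \eqref{eq:estim-h1} it produces the term $\int f_\eps(u^\eps)u^\eps_{xxx}I(u^\eps)_x$, while in \eqref{eq:ent-eps} the dissipation $\int u^\eps_x I(u^\eps)_x$ equals $-\|u^\eps\|^2_{\dot H^{3/2}_N}$ by Proposition~\ref{prop:semi-norms}. The plan is to control both of these using the identity $\int(I(u)_x)^2 = \|u\|^2_{\dot H^2_N} = \int(u_{xx})^2$ from \eqref{eq:semi-norm} and the interpolation $\|u\|^2_{\dot H^{3/2}_N}\le\tfrac12\|u\|^2_{\dot H^2_N}+C\|u_x\|^2_{L^2}$, then to add the two with compatible weights so that all the $\dot H^2$ contributions on the right are absorbed into the left, leaving only a term proportional to $\|u^\eps_x\|^2_{L^2}$ to which Gronwall applies.

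First I would rewrite \eqref{eq:estim-h1}. Using Young's inequality and $f_\eps\le M$,
\[
\Bigl|\int f_\eps(u^\eps)u^\eps_{xxx} I(u^\eps)_x\,dx\Bigr|
\le \tfrac12\int f_\eps(u^\eps)(u^\eps_{xxx})^2\,dx + \tfrac{M}{2}\int(I(u^\eps)_x)^2\,dx,
\]
and by \eqref{eq:semi-norm} with $k=1$ the second integral equals $\int(u^\eps_{xx})^2\,dx$. Plugging this into \eqref{eq:estim-h1} and absorbing the first term yields
\[
\intom(u^\eps_x)^2\,dx+\tfrac12\int_0^t\!\!\intom f_\eps(u^\eps)(u^\eps_{xxx})^2\,ds\,dx
\le \intom(u_0)_x^2\,dx+\tfrac{M}{2}\int_0^t\!\!\intom(u^\eps_{xx})^2\,ds\,dx. \tag{$\ast$}
\]
For the entropy side, Proposition~\ref{prop:semi-norms} gives $\int u^\eps_x I(u^\eps)_x\,dx=-\|u^\eps\|^2_{\dot H^{3/2}_N}$, so \eqref{eq:ent-eps} becomes
\[
\intom G_\eps(u^\eps(t))\,dx+\int_0^t\!\!\intom (u^\eps_{xx})^2\,ds\,dx
\le \intom G_\eps(u_0)\,dx+\int_0^t\|u^\eps\|^2_{\dot H^{3/2}_N}\,ds.
\]
Applying the interpolation $\|u^\eps\|^2_{\dot H^{3/2}_N}\le \tfrac12\|u^\eps\|^2_{\dot H^2_N}+C\|u^\eps_x\|^2_{L^2}$ (already used in the introduction) and absorbing yields
\[
\intom G_\eps(u^\eps(t))\,dx+\tfrac12\int_0^t\!\!\intom (u^\eps_{xx})^2\,ds\,dx
\le \intom G_\eps(u_0)\,dx+C\int_0^t\!\!\intom(u^\eps_x)^2\,ds\,dx. \tag{$\ast\ast$}
\]

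Adding $(\ast)$ to $2M$ times $(\ast\ast)$, the coefficients of $\int_0^t\!\int(u^\eps_{xx})^2$ on the two sides are $M$ on the left and $M/2$ on the right, so the $M/2$ can be absorbed to leave $M/2$ on the left. The result is
\[
H_\eps(u^\eps(t))+\tfrac{M}{2}\int_0^t\!\!\intom(u^\eps_{xx})^2\,ds\,dx+\tfrac12\int_0^t\!\!\intom f_\eps(u^\eps)(u^\eps_{xxx})^2\,ds\,dx
\le H_\eps(u_0)+K\int_0^t\!\!\intom(u^\eps_x)^2\,ds\,dx
\]
for an explicit constant $K$. Since $\int(u^\eps_x)^2\le H_\eps(u^\eps)$, Gronwall's inequality gives $H_\eps(u^\eps(t))\le H_\eps(u_0)e^{Kt}$, which is the announced bound (the specific rate $t/2$ follows by choosing the interpolation and Young constants so that $K=1/2$; any fixed rate suffices for the applications).

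The main obstacle is the one just navigated: both \eqref{eq:estim-h1} and \eqref{eq:ent-eps} have destabilizing top-order terms of order $\dot H^{3/2}_N$ or $\dot H^2_N$ on the right-hand side, and the whole point is to weight the two inequalities so that the top-order $\int(u^\eps_{xx})^2$ contributions produced by the two nonlocal terms can be simultaneously absorbed into the left. The fact that the coefficient of the $(u^\eps_{xx})^2$ dissipation in \eqref{eq:ent-eps} is unity (coming from the pure Laplacian piece of the entropy dissipation), while the coefficient arising from bounding $I(u^\eps)_x$ in \eqref{eq:estim-h1} is $M/2$, is exactly what makes the weight $2M$ work.
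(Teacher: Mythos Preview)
Your proof is correct and follows essentially the same route as the paper's own argument: both derive your inequality $(\ast)$ from \eqref{eq:estim-h1} via Young and $\int (I(u^\eps)_x)^2=\int (u^\eps_{xx})^2$, derive $(\ast\ast)$ from \eqref{eq:ent-eps} via the $\dot H^{3/2}_N$--$\dot H^1$--$\dot H^2_N$ interpolation, add with weight $2M$ on the entropy inequality, and finish with Gronwall. Your remark about the exponent is apt---the paper's displayed Gronwall step is terse about the constant, and as you note any fixed rate suffices for the subsequent compactness argument.
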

\begin{proof}[Proof of Lemma \ref{lem:add}]
Using \eqref{eq:estim-h1} and \eqref{eq:ent-eps} we see that
\begin{multline*}
 \intom G_\eps(u^\eps (t))\, dx+ \frac12 \int_0^t \intom (u^\eps_{xx})^2 \, dx\, ds \\
\le \intom G_\eps(u_0) \, dx + \frac12 \int_0^t \intom (u^\eps_x)^2
 \, dx\, ds
\end{multline*} 
and   
\begin{multline*}
\intom (u^\eps_x)^2  \, dx + \frac12 \int_0^t \intom
f_\eps(u^\eps)[u_{xxx}^\eps]^2  \, dx\, ds \\
\le \intom((u_0)_x)^2 \, dx +\frac{M}2 \int_0^t \intom (u^\eps_{xx})^2 \, dx\, ds.
\end{multline*}
This implies
$$
H_\eps (u^\eps (t)) \le H_\eps (u_0) + \int_0^t H_\eps (u^\eps(s))
ds,
$$
and Gronwall's lemma yields the desired result.
\end{proof}

\paragraph{Sobolev and H\"older bounds.}
We now gather all the a priori estimates: Using the conservation of mass,
Lemma \ref{lem:add} and inequality \eqref{eq:entropy_init}, we see that there exists a
constant $C$ independent of $\eps$ such that:
\begin{eqnarray}
\sup_{t\in[0,T]}\intom (u^\eps_x(t))^2 \, dx   & \leq &  C, \label{eq:H1est} \\
\sup_{t\in[0,T]}\intom G_\eps(u^\eps (t))\, dx & \leq & C, \label{eq:entrest}\\
 \int_0^T \intom
f_\eps(u^\eps)[u_{xxx}^\eps]^2  \, dx\, dt & \leq  & C,\label{eq:xxxest}\\
\int_0^T \intom (u^\eps_{xx})^2 \, dx \, dt & \leq & C. \label{eq:xxest}
\end{eqnarray}
Next, we note that \eqref{eq:H1est} yields
$$ 
\E(u^\eps) \geq - ||u^\eps||_{L^\infty(0,T;\dot{H}^{1/2}(\Omega))} \geq -
C ||u^\eps||_{L^\infty(0,T;H^{1}(\Omega))}  \geq -C
$$
and so \eqref{eq:nrj-eps} gives
\begin{equation}\label{eq:dissest}
\int_0^T \intom f_\eps (u^\eps) \big[u^\eps_{xxx} -
I(u^\eps)_{xx}\big]^2 \, ds\, dx \leq C.
\end{equation}
Finally, estimates~\eqref{eq:holder}, \eqref{eq:H1est} and
  \eqref{eq:dissest} yield that $u^\eps$ is bounded in $\mathcal
  C^{1/2,1/8}_{x,t}(Q)$.

\paragraph{Limit $\mathbf{\eps\to 0}$.}
The previous H\"older estimate implies that there exists a function
$u(x,t)$ such that $u^\eps$ converges uniformly to $u$ as $\eps$ goes
to zero (up to a subsequence).
 Inequality (\ref{eq:xxest}) also
implies that
$$ u^\eps\rightharpoonup u \mbox{ in }  L^2(0,T;H^2(\Omega)) \mbox{-weak}
$$
and Aubin's lemma gives
$$  u^\eps \longrightarrow u \mbox{ in }  L^2(0,T;H^1(\Omega))\mbox{-strong}.
$$
After integration by parts, (\ref{eq:reg-weak}) can be written as
\begin{multline*}
  \iint_Q u^\eps  \phi_t  - f_\eps (u^\eps) [ u^\eps_{xx}-
  I(u^\eps)] \phi_{xx}  - f_\eps'(u^\eps)u^\eps_x (u^\eps_{xx}-I(u^\eps)) \phi_x  \, dt\, dx  
 \\ = \intom u_0(x)\phi(0,x) \, dx
\end{multline*}
and passing to the limit $\eps\to 0$ gives (\ref{eq:weak}).

\paragraph{Non-negative solution.}
It only remains to show that $u$ is non-negative. This can be done as in
\cite{bf90}, using (\ref{eq:entrest}) (and the fact that $f$ satisfies (\ref{eq:f}) with $n>1$).

\paragraph{$L^\infty$ a priori estimate.}
Finally, (\ref{eq:H1est}) and Sobolev's embedding implies
that there exits a constant $M_0$ depending only on $
||u_0||_{H^1(\Omega)}$ such that
\begin{equation}\label{eq:linfty}
 0\leq u(t,x) \leq M_0.
\end{equation}
Choosing $M> M_0$, we deduce that $u$ solves (\ref{eq:0}).
\end{proof}

\section{Proof of Theorem \ref{thm:second}}
\label{sec:proof2}

In order to get Theorem~\ref{thm:second}, we need to derive the following
corollary from Theorem~\ref{thm:main}.
\begin{corollary}\label{cor:thm-main}
  The solution $u$ constructed in Theorem~\ref{thm:main} satisfies for all
  $\phi \in \dis ((0,T) \times \bar \Omega)$,
\begin{equation}\label{eq:weak-P}
  \iint_Q u  \phi_t \, dt\, dx + \iint_P f (u) [ u_{xx}-
  I(u)]_x \phi_x  \, dt\, dx = 0
\end{equation}
where $P = \{ (x,t) \in \bar Q : u(x,t) >0,t>0\}$. 
\end{corollary}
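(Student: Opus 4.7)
The plan is to reprove the convergence $\eps\to 0$ carried out in the proof of Theorem~\ref{thm:main}, but starting from the singly-integrated weak formulation \eqref{eq:reg-weak} rather than the doubly-integrated one. The non-trivial step will be to identify the weak limit of the flux $h^\eps := f_\eps(u^\eps)[u^\eps_{xx}-I(u^\eps)]_x$, and in particular to show that this limit vanishes almost everywhere on the contact set $Q\setminus P = \{u=0\}\cap Q$.

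First, I would establish a uniform $L^2$ bound on $h^\eps$. Writing
$$
h^\eps \;=\; \sqrt{f_\eps(u^\eps)}\cdot\Bigl(\sqrt{f_\eps(u^\eps)}[u^\eps_{xx}-I(u^\eps)]_x\Bigr),
$$
the factor in parentheses is bounded in $L^2(Q)$ by the energy estimate \eqref{eq:dissest}, while $\sqrt{f_\eps(u^\eps)}\le\sqrt{M}$. Hence $h^\eps$ is bounded in $L^2(Q)$ and, up to a subsequence, $h^\eps\rightharpoonup h$ weakly in $L^2(Q)$. Combining this with the uniform convergence $u^\eps\to u$ from the H\"older estimate \eqref{eq:holder} (which also justifies passing to the limit in the $u^\eps\phi_t$ term) and the fact that $\phi(0,\cdot)=0$ for $\phi\in\dis((0,T)\times\bar\Omega)$, the regularized weak formulation \eqref{eq:reg-weak} passes to the limit as
$$
\iint_Q u\,\phi_t\,dt\,dx \;+\; \iint_Q h\,\phi_x\,dt\,dx \;=\; 0.
$$

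Next, I would identify $h$ on the positivity set $P$. Since $u$ is continuous, $P$ is open; for any compact $K\subset P$ there exists $c_K>0$ with $u\ge c_K$ on $K$, and by uniform convergence $u^\eps\ge c_K/2$ on $K$ for $\eps$ small enough. Taking $\eps$ so small that $\eps<f(c_K/2)$, the truncation is inactive and $f_\eps(u^\eps)=f(u^\eps)\to f(u)$ uniformly on $K$, bounded below by $f(c_K/2)>0$. Dividing, $[u^\eps_{xx}-I(u^\eps)]_x = h^\eps/f_\eps(u^\eps)$ is bounded in $L^2(K)$; its weak $L^2$ limit must coincide with the distributional derivative $[u_{xx}-I(u)]_x$, because $u^\eps\to u$ strongly in $L^2(0,T;H^1(\Omega))$ forces $[u^\eps_{xx}-I(u^\eps)]_x\to[u_{xx}-I(u)]_x$ in $\dis'(K)$. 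Uniform convergence of $f_\eps(u^\eps)$ then gives $h=f(u)[u_{xx}-I(u)]_x$ a.e.\ on $K$, and hence on all of $P$.

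The main obstacle, and the last step, is to show $h=0$ a.e.\ on $Q\setminus P=\{u=0\}\cap Q$. For any measurable $A\subset\{u=0\}$, Cauchy--Schwarz gives
$$
\left|\iint_A h^\eps\,\psi\right| \le \|\psi\|_{L^\infty}\,\bigl\|\sqrt{f_\eps(u^\eps)}\bigr\|_{L^2(A)}\,\bigl\|\sqrt{f_\eps(u^\eps)}[u^\eps_{xx}-I(u^\eps)]_x\bigr\|_{L^2(Q)},
$$
so it suffices to show $\iint_A f_\eps(u^\eps)\,dt\,dx\to 0$. Fix $\delta>0$. By \eqref{eq:f} and continuity of $f$ at $0$, there is $\eta>0$ with $f(s)<\delta/2$ for $0\le s\le\eta$. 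Uniform convergence $u^\eps\to u\equiv 0$ on $A$ gives $|u^\eps|<\eta$ on $A$ for $\eps$ small; taking also $\eps<\delta/2$, we obtain $f_\eps(u^\eps)=\min(\max(\eps,f(|u^\eps|)),M)<\delta$ on $A$, whence $\iint_A f_\eps(u^\eps)\le\delta\,|A|$. Letting $\delta\to 0$ gives $\iint_A h^\eps\to 0$ along the subsequence, so $\iint_A h\,dx\,dt=0$; since $A$ was an arbitrary measurable subset of $Q\setminus P$, $h=0$ a.e.\ there. Combining the two identifications yields precisely \eqref{eq:weak-P}.
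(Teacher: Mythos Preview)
Your proof is correct and follows essentially the same approach as the paper's own proof: both pass to the limit in the singly-integrated formulation \eqref{eq:reg-weak}, obtain a uniform $L^2(Q)$ bound on the flux $h^\eps$ from the energy dissipation \eqref{eq:dissest}, and then identify the weak limit $h$ separately on $\{u>0\}$ (via a lower bound on $f_\eps(u^\eps)$ and distributional identification) and on $\{u=0\}$ (via Cauchy--Schwarz and $f_\eps(u^\eps)\to 0$). The only differences are cosmetic: the paper works on the level sets $\{u\le\eta\}$ and $\{u>2\eta\}$ rather than on arbitrary measurable $A\subset\{u=0\}$ and compact $K\subset P$.
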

\begin{proof}
  In view of the proof of Theorem~\ref{thm:main}, $u$
    is the uniform limit of a subsequence of $(u^\eps)_{\eps >0}$
    where $u^\eps$ is given by Theorem~\ref{thm:existence-reg}. Since
    $u^\eps$ satisfies \eqref{eq:reg-weak}, it is thus enough to pass
    to the limit in this weak formulation as $\eps \to 0$ in order to
    get the desired result.  Let $h^\eps$ denote $f_\eps
    (u^\eps)[u^\eps_{xx}-I(u)]_x$. Estimates~\eqref{eq:dissest} and
    \eqref{eq:linfty} imply
\begin{equation}
\label{estim:heps}
\iint_Q h_\eps^2 \, dx \, dt \le C. 
\end{equation}
In other words, $(h^\eps)_\eps$ is bounded in $L^2 (Q)$.  Hence, up to
a subsequence, 
$$ 
h^\eps \rightharpoonup h \qquad \mbox{ in $  L^2(Q)-$weak}.
$$
Furthermore, we recall that  there exists a continuous function $u(x,t)$ such that $u^\eps$ converges uniformly
to $u$ as $\eps$ goes to zero (up to a subsequence).

Passing to the limit in \eqref{eq:reg-weak}, we deduce that the
function $u$ satisfies
$$
\iint_Q u  \phi_t \, dt\, dx + \iint_\Omega h \phi_x  \, dt\, dx =0.
$$
We now have to show that
$$ 
h= \left\{
\begin{array}{ll}
0 &\mbox{ in }\{u=0\},\\
f (u) \big[u_{xx} -I(u)\big]_x&\mbox{ in }P=\{u>0\}.
\end{array}
\right.
$$
First we note that for any test function $\phi$ and $\eta>0$, we have
\begin{multline*}
  \left| \int_0^T\int_{\{u\leq \eta \}}
    f_\eps (u^\eps) \big[u^\eps_{xxx} -I(u^\eps)_{x}\big] \phi \, dx\, dt\right|\\
  \leq C (\phi) \bigg(f_\eps
  (3\eta/2) \bigg)^{1/2}\left( \int_0^T\int_{\{u\leq
      \eta\}} f_\eps (u^\eps) \big[u_{xxx} -I(u)_{x}\big]^2 \, dx\,
    dt\right)^{1/2}
\end{multline*}
for $\eps$ small enough (so that $|u^\eps-u|\leq \eta /2$).
Inequality (\ref{eq:dissest}) thus implies
$$ 
\limsup_{\eps\to 0} \left| \int_0^T\int_{\{u\leq 2 \eta\}} f_\eps
  (u^\eps) \big[u^\eps_{xxx} -I(u^\eps)_{x}\big] \phi \, dx\, dt
\right| \leq C(\phi) f(\eta/2)^{1/2}.
$$
We deduce (since $f(0)=0$)
\begin{equation}\label{eq:h=0}
h=0 \qquad \mbox{ on } \{u=0\}.
\end{equation}
Next,  \eqref{estim:heps} yields (for $\eps>0$ and $\eta$ small enough)
$$ 
 \iint_{\{u > 2\eta \}} |u_{xxx}^\eps-I(u^\eps)_x|^2  \, dx\, ds \leq C(\eta).
$$
This implies that, if $Q_\eta$ denotes $\{ u > 2 \eta\}$, $(u_{xxx}^\eps
- I (u^\eps)_x)$ is bounded in the space $L^2 (Q_\eta)$. Hence, we can
extract from $(u_{xxx}^\eps - I (u^\eps)_x)_{\eps>0}$ a subsequence
converging weakly in $L^2 (Q_\eta)$. Moreover, remark that $Q_\eta$ is an
open subset of $Q$ (recall that $u$ is H\"older continuous) and
$u^\eps_{xxx} -I(u^\eps)_x$ converges in the sense of distributions to
$u_{xxx}- I (u)_x$ (use the integral representation for $I(\cdot)$). We thus conclude that,
$$
u^\eps_{xxx}-I(u^\eps)_x \rightharpoonup u_{xxx}-I(u)_x \quad
\text{ in } L^2 (Q_\eta).
$$
This yields
$$
h=  f (u) \big[u_{xxx} -I(u)_{xx}\big]\quad  \mbox{ in } \{u>0\}
$$
which concludes the proof of Corollary~\ref{cor:thm-main}.
\end{proof}

We now turn to the proof of Theorem~\ref{thm:second}. 
\begin{proof}[Proof of Theorem \ref{thm:second}]
When $u_0$ does not satisfy (\ref{eq:entropy_init}), we lose the
$L^2(0,T;H^2(\Omega))$ bound on $u^\eps$, and the previous analysis fails.
However, we can introduce
$$
u_0^\delta=u_0+\delta
$$
which satisfies (\ref{eq:entropy_init}).  Theorem \ref{thm:main} then
provides the existence of a {\it non-negative} solution $u^\delta$ of
\eqref{eq:weak}. In view of Corollary~\ref{cor:thm-main}, $u^\delta$
satisfies:
\begin{equation}\label{eq:eq_delta}
\iint_Q u^\delta  \phi_t \, dt\, dx + \iint_P f (u^\delta) [ u^\delta_{xx}-
I(u^\delta)]_x \phi_x  \, dt\, dx = 0.
\end{equation}
Since $u^\delta$ is non-negative, the conservation of mass gives a bound in
the space $L^\infty(0,T;L^1(\Omega))$ and allows us to make use of the
energy inequality: Indeed, using~(\ref{eq:nrj}) and Lemma~\ref{lem:h1} we
see that there exists a constant $C$ independent of $\delta$ such that
$$ ||u^\delta||_{L^\infty(0,T;H^1(\Omega))} \leq C$$
and
\begin{equation}
\int_0^T \intom f (u^\delta) \big[u^\delta_{xxx} -
I(u^\delta)_{x}\big]^2 \, ds\, dx  \leq   C. \label{eq:dissest_delta}
\end{equation}
We now define the flux
$$ 
h^\delta= f (u^\delta) \big[u^\delta_{xxx} -I(u^\delta)_{x}\big].
$$
Inequality (\ref{eq:dissest_delta}) implies that $h^\delta$ is bounded in
$L^2(Q)$, and that there exists a function $h\in L^2(Q)$ such that
$$ h^\delta \rightharpoonup h \qquad \mbox{ in $  L^2(Q)-$weak}.$$
Proceeding as in the proof of Theorem \ref{thm:main}, we deduce that
$u^\delta$ is bounded in $\mathcal C^{1/2,1/8}(\Omega\times(0,T))$ and
that there exists a function $u(x,t)$ such that $u^\delta$ converges
uniformly to $u$ as $\delta$ goes to zero (up to a
subsequence). We can now argue (with minor changes) as
  in the proof of Corollary~\ref{cor:thm-main} and conclude.
\end{proof}

\appendix

\section{Proof of Proposition~\ref{prop:corner}}
\label{app:corner-proof}
Our goal here is to prove the existence of a  weak solution of
$$ 
u_t + (a u_{xxx})_x = (ag)_x.
$$
We first prove the following proposition.
\begin{proposition} \label{prop:stat} For all $h \in H^1 (\Omega)$,
  there exists $v \in V_0:=H^1 \cap H^3_N$ such that for all $\phi \in
  \dis (\bar \Omega)$,
\begin{equation}\label{eq:weak-stat}
- \intom \frac{v-h}\tau \phi \, dx + \int a v_{xxx} \phi_x \,dx=
\intom a g \phi_x \, dx.
\end{equation}
In particular,
\begin{eqnarray}
\intom v \, dx &=& \intom h \, dx, \nonumber \\
\frac12 \intom v_x^2 + \tau \intom a v_{xxx}^2& \le & \frac12 \intom
h_x^2 + \tau \intom a g v_{xxx}. \label{estim-stat}
\end{eqnarray}
\end{proposition}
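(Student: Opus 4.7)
The plan is to obtain $v$ as the weak limit of a Galerkin approximation built on the orthonormal basis $\{\varphi_k\}_{k \ge 0}$ of eigenfunctions of the Neumann Laplacian from Section~\ref{sec:pre}. Setting $V_N = \mathrm{span}\{\varphi_0, \dots, \varphi_N\}$, I would look for $v_N \in V_N$ satisfying the analogue of \eqref{eq:weak-stat} for every test function $\phi \in V_N$, with $h$ replaced by its $L^2$-projection $h_N$ onto $V_N$. Since each $\varphi_k$ belongs to $H^3_N(\Omega)$, every $v_N \in V_N$ inherits this regularity automatically, so the term $a (v_N)_{xxx}$ in the weak formulation is classically well defined.

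To show that the resulting finite-dimensional linear system for the coefficients of $v_N$ is solvable, it suffices to verify uniqueness, i.e.\ that $h_N = 0$ and $g = 0$ force $v_N = 0$. The key observation is that $(\varphi_k)_{xx} = -\lambda_k \varphi_k \in V_N$, so $\phi = -(v_N)_{xx}$ is an admissible test function. Plugging it in and integrating by parts once (the only boundary contribution that arises, namely $[(v_N - h_N)(v_N)_x/\tau]_{\partial\Omega}$, vanishes because $(v_N)_x = 0$ on $\partial\Omega$) I obtain
$$
\int_\Omega (v_N)_x^2\, dx + \tau \int_\Omega a (v_N)_{xxx}^2\, dx = \int_\Omega (h_N)_x (v_N)_x\, dx + \tau \int_\Omega a g\, (v_N)_{xxx}\, dx.
$$
When $h_N = 0$ and $g = 0$, this forces $(v_N)_x \equiv 0$, so $v_N$ is constant, and the conservation of mass obtained by testing against $\varphi_0 \equiv 1$ gives $v_N \equiv 0$. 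The same identity in the general case, combined with Young's inequality applied to $\int (h_N)_x (v_N)_x$, yields the bound \eqref{estim-stat} for $v_N$ uniformly in $N$.

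Hence, up to a subsequence, $v_N \rightharpoonup v$ weakly in $H^3_N(\Omega)$, so that $v \in V_0$. Because $\{\varphi_j\}_{j \ge 0}$ is a basis of $H^1(\Omega) = H^1_N(\Omega)$ and every term of \eqref{eq:weak-stat} depends continuously on $\phi$ in the $H^1$ norm, passing to the weak limit produces \eqref{eq:weak-stat} for every $\phi \in \dis(\bar\Omega)$; conservation of mass and the estimate \eqref{estim-stat} for $v$ follow by weak lower semicontinuity. The only delicate point I anticipate is that $h$ is merely in $H^1(\Omega)$ with no prescribed boundary behavior, but this is harmless since every boundary term appearing in the integrations by parts carries a factor of $(v_N)_x$ evaluated on $\partial\Omega$, which vanishes by the choice of the approximation space $V_N \subset H^3_N$.
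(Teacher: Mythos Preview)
Your Galerkin argument is correct and self-contained: testing the finite-dimensional system with $\phi=-(v_N)_{xx}\in V_N$ gives the key identity, uniqueness (hence solvability) follows, the a priori bound is uniform in $N$ because $\int (h_N)_x^2 \le \int h_x^2$, and passing to the weak $H^3$ limit (with $H^3_N$ closed under weak limits) recovers \eqref{eq:weak-stat} by density of $\bigcup_M V_M$ in $H^1$.

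The paper proceeds differently. Rather than Galerkin, it applies Lax--Milgram directly, after a reformulation designed to make the underlying bilinear form coercive on $V_0$: it replaces the test function $\phi$ by $-\psi_{xx}+\int_\Omega\psi$ (Lemma~\ref{lem:reform}), which turns \eqref{eq:weak-stat} into
\[
A(v,\psi)=\int v_x\psi_x + \tau\int a\,v_{xxx}\psi_{xxx} + \Big(\int v\Big)\Big(\int\psi\Big) = L(\psi),
\]
a form that is manifestly coercive with respect to $\|v\|_{V_0}^2 = \|v_{xxx}\|_{L^2}^2+(\int v)^2$. The paper's approach is shorter once the reformulation lemma is in hand and avoids any limiting procedure; your approach avoids the need for Lemma~\ref{lem:reform} and the somewhat unusual norm on $V_0$, at the cost of a standard compactness step. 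Both arrive at \eqref{estim-stat} by the same computation (test with $-v_{xx}$), either at the Galerkin level or a posteriori.
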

\begin{proof}
In order to prove this proposition, we have to reformulate the
equation. More precisely, instead of choosing test functions $\phi
\in \dis (\bar \Omega)$, we choose $\phi = - \psi_{xx} + \intom
\psi \, dx$ where $\psi$ is given by the following lemma
\begin{lemma}\label{lem:reform}
  For all $\phi \in \dis(\bar \Omega)$, there exists $\psi
  \in \dis (\bar \Omega)$ such that
$$
- \psi_{xx} + \intom \psi \, dx = \phi.
$$ 
\end{lemma}
Hence, we consider $V_0 := H^1  \cap H^3_N$ equipped with the
norm $\|v\|^2_{V_0} = \|v_{xxx}\|^2_{L^2}+(\int v\, dx)^2$ and we look for $v \in V_0$ such
that for all $\psi \in \dis (\bar \Omega)$,
\begin{multline} \label{eq:new-form}
    \intom v_x \psi_{x} \, dx + \tau \int a v_{xxx} \psi_{xxx} \,dx 
+ \left(\intom v \, dx \right) \left(\intom \psi \, dx\right)\\
  = \intom h_x \psi_{x} \, dx + \left(\intom h \, dx
  \right) \left(\intom \psi \, dx\right) + \tau \intom a g \psi_{xxx}
  \, dx.
\end{multline}
We thus consider the bilinear form $A$ in $V_0$ defined as follows:
for all $v,w \in V_0$, 
$$
A (v,w)= \intom v_x w_x \, dx +  \tau \int a v_{xxx} w_{xxx} \,dx 
+ \left(\intom v \, dx \right) \left(\intom w \, dx\right). 
$$
We check that it is continuous and coercive:
\begin{eqnarray*}
|A(v,w)| &\le& \|v_x\|_2 \|w_x\|_2 + M\tau \|v_{xxx}\|_2
\|w_{xxx}\|_2 + \|v\|_1 \|w\|_1  \\
&\le& C \|v\|_{V_0} \|w\|_{V_0}\\
A(v,v) &\ge& \intom [ (v_x)^2 + \eps \tau (v_{xxx})^2 ]\, dx +\left( \intom v\, dx\right)^2 \ge \|v\|^2_{V_0}.
\end{eqnarray*}
We now consider the following linear form $L$ in $V_0$: for all $w \in
V_0$,
$$
L(w) = \intom h_x w_{x} \, dx + \left(\intom h \, dx \right)
\left(\intom w \, dx\right) + \tau \intom a g w_{xxx} \, dx
$$
Since $0 \le a \le M$ and $g \in L^2$, $L$ is continuous as soon as $h
\in H^1(\Omega)$. Lax-Milgram theorem thus implies that there exists
$v \in V_0$ such that \eqref{eq:new-form} holds true for all $w \in
V_0$. 

Eventually, remark that conservation of mass and
  \eqref{estim-stat} are direct consequences of
  \eqref{eq:weak-stat}.  The proof of Proposition~\ref{prop:stat} is
now complete.
\end{proof}
We can now prove Proposition~\ref{prop:corner}. 
\begin{proof}[Proof of Proposition~\ref{prop:corner}]
  For any $\tau >0$, we consider $N_\tau = \lceil
    \frac{T}{\tau} \rceil$.  We then define inductively a sequence
    $(u^n)_{n=0,\dots,N_\tau}$ of $V_0$ as follows: $u^0 =u_0$ and
    $u^{n+1}$ is obtained by applying Proposition~\ref{prop:stat} to
    $h = u^n$. We then define $u^\tau:[0,N_\tau \tau) \times \Omega$ as
    follows:
$$
u^\tau (t,x) = u^n (x) \quad \text{ for } t \in [n\tau, (n+1)\tau).
$$
We have $\intom u^\tau(t,x) \, dx = \intom u_0 (x) \, dx$ for all
$t$. We also derive from \eqref{estim-stat} that we have
\begin{multline*}
\intom (u^\tau_x)^2 (T,x) \, dx + \int_0^T \intom a (u^\tau_{xxx})^2
(t,x) \, dt dx \\ \le \intom ((u_0)_x)^2 \, dx + \int_0^T \intom (a g
u_{xxx})(t,x) \, dt dx
\end{multline*}
In particular, $(u^\tau)_\tau$ is bounded in $L^\infty (0,T;H^1
(\Omega))$ and $(S_\tau u^\tau -u^\tau)_\tau$ is bounded in $L^2
(0,T-\tau;H^{-1} (\Omega))$ where $S_\tau v (t,x)= v(t+\tau,x)$. We
derive from \cite[Theorem~5]{simon} that $(u^\tau)_\tau$ is relatively compact in
$\mathcal{C} (0,T; L^2 (\Omega))$.

We now have to pass to the limit in
  \eqref{eq:weak-stat}. Since $(u^\tau_{xxx})_\tau$ is bounded in $L^2
(Q)$ and we can find a sequence $\tau_n \to 0$ such that $u^{\tau_n}
\to u$ in $\mathcal{C}(0,T,L^2(\Omega))$ and $u^{\tau_n}_{xxx} \to
u_{xxx}$ in $L^2 (Q)$. This is enough to conclude.

We next explain how to get
  \eqref{eq:holder-corner}. Sobolev's embedding imply that there
  exists a constant $K$ (depending on
  $\|u\|_{L^\infty(0,T;H^1(\Omega))}$) such that
$$
| u(x_1,t)-u(x_2,t)| \leq K|x_1-x_2|^{1/2}
$$
for all $x_1$, $x_2\in \Omega$ and for all $t\in (0,T)$.
Since $u$ satisfies
$$ 
u_t =h_x
$$
with $h \in L^2(Q)$, 
it is a fairly classical result that H\"older regularity in space
implies H\"older regularity in time. More precisely, we have (see
\cite{bf90}, Lemma 2.1 for details):
\begin{lemma}\label{lem:holder}
  There exists a constant $C$ such that for all $x_1$, $x_2$ in
  $\Omega $ and all $t_1$, $t_2>0$,
$$ 
|u(x_1,t_1)-u(x_2,t_2)|\leq C|x_1-x_2|^{1/2}+C|t_1-t_2|^{1/8}.
$$
\end{lemma}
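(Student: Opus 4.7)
\textbf{Proof proposal for Lemma~\ref{lem:holder}.}
By the triangle inequality and the given spatial H\"older bound $|u(x_1,t)-u(x_2,t)| \le K|x_1-x_2|^{1/2}$, it suffices to prove the temporal estimate
$$ |u(x_0,t_1) - u(x_0,t_2)| \le C |t_1 - t_2|^{1/8} \qquad \text{for all } x_0 \in \Omega,\ t_1,t_2 \in (0,T). $$
My plan is to compare $u(x_0,\cdot)$ with a spatial mollification of $u$ at $x_0$ at scale $\delta$: the spatial H\"older regularity controls the ``mollification error'' by $\delta^{1/2}$, while the equation $u_t = h_x$ together with $h \in L^2(Q)$ controls the time increment of the mollified function by $\delta^{-3/2}|t_1-t_2|^{1/2}$. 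Optimizing in $\delta$ will produce the exponent $1/8$.

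Fix $\phi \in C_c^\infty(\R)$ with $\operatorname{supp}\phi \subset (-1,1)$, $\phi \ge 0$, and $\int \phi = 1$, and set $\phi_\delta(x) = \delta^{-1}\phi((x-x_0)/\delta)$. When $x_0$ is within $\delta$ of $\pa\Omega$, I replace this by a translated bump supported in $\Omega$ at distance at most $2\delta$ from $x_0$; the estimates below are unaffected. Write
$$ u(x_0,t_2) - u(x_0,t_1) = A_2 - A_1 + B, $$
with $A_i = u(x_0,t_i) - \int_\Omega \phi_\delta(x) u(x,t_i)\,dx$ and $B = \int_\Omega \phi_\delta(x)\bigl(u(x,t_2)-u(x,t_1)\bigr)\,dx$. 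The spatial H\"older bound immediately gives
$$ |A_i| \le K\int_\Omega \phi_\delta(x)|x-x_0|^{1/2}\,dx \le CK\delta^{1/2}. $$
For $B$, I use that $u_t = h_x$ in $\dis'(\Omega\times(0,T))$: testing against $\phi_\delta(x)\chi(t)$, with $\chi$ a smooth approximation of $\un_{[t_1,t_2]}$ (valid since $u\in \mathcal{C}([0,T];L^2(\Omega))$ and $\phi_\delta$ has compact support in $\Omega$), and passing to the limit,
$$ B = -\int_{t_1}^{t_2}\int_\Omega \phi_\delta'(x)\, h(x,t)\,dx\,dt. $$
Applying Cauchy-Schwarz on $\Omega\times(t_1,t_2)$ together with the scaling identity $\|\phi_\delta'\|_{L^2(\R)} = \delta^{-3/2}\|\phi'\|_{L^2(\R)}$,
$$ |B| \le \|h\|_{L^2(Q)}\, |t_2-t_1|^{1/2}\, \|\phi_\delta'\|_{L^2} \le C\delta^{-3/2}|t_2-t_1|^{1/2}. $$

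Combining these estimates yields $|u(x_0,t_2)-u(x_0,t_1)| \le C\bigl(\delta^{1/2}+\delta^{-3/2}|t_2-t_1|^{1/2}\bigr)$. The choice $\delta = |t_2-t_1|^{1/4}$ (legitimate for $|t_2-t_1|$ small; for larger $|t_2-t_1|$ the conclusion follows from the $L^\infty$ bound already built into the spatial H\"older estimate together with $\operatorname{diam}(\Omega)<\infty$) balances both contributions and produces the claimed bound $C|t_2-t_1|^{1/8}$. The heart of the argument is this dimensional balance between the H\"older gain, the loss from $\|\phi_\delta'\|_{L^2}\sim \delta^{-3/2}$, and the $|t_2-t_1|^{1/2}$ from Cauchy-Schwarz in time. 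I expect the only delicate point to be the bookkeeping at the boundary, handled as indicated by shifting to a one-sided mollifier; the Neumann and null-flux boundary conditions ensure that no boundary terms appear in the distributional integration by parts.
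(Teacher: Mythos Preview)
Your argument is correct and is precisely the mollification/optimization argument the paper has in mind: the paper does not spell out a proof but simply refers to \cite[Lemma~2.1]{bf90}, whose proof is exactly the one you wrote (compare $u(x_0,t_i)$ to a spatial average at scale $\delta$, use $u_t=h_x$ with $h\in L^2(Q)$ to bound the time increment of the average by $C\delta^{-3/2}|t_2-t_1|^{1/2}$, and choose $\delta=|t_2-t_1|^{1/4}$). Your handling of the boundary via a shifted one-sided mollifier is the standard fix and is consistent with the setting here.
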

The proof of Proposition~\ref{prop:corner} is thus  complete.
\end{proof}

\section{Proof of \eqref{eq:nrj-eps}, \eqref{eq:estim-h1} and \eqref{eq:ent-eps}}
\label{app:nrj-entropy}

We have to derive \eqref{eq:nrj-eps}, \eqref{eq:estim-h1} and
\eqref{eq:ent-eps} from \eqref{eq:reg-weak}. Using the fact that $u^\eps$
lies in $\mathcal{C}([0,T], H^1 (\Omega))$, we first state the following
lemma
\begin{lemma}\label{lem:1}
For all $\phi \in \dis (\bar Q)$, 
\begin{multline}\label{eq:reg-weak-extended}
  \iint_Q u^\eps \phi_t + f_\eps (u^\eps) [ u^\eps_{xx}- I(u^\eps)]_x
  \phi_x \, dt\, dx \\= \intom u^\eps (x,T) \phi(x,T) - \intom u_0 (x) \phi
  (x,0) \, dx.
\end{multline}
\end{lemma}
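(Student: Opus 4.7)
The plan is to extend the weak formulation \eqref{eq:reg-weak}, which only admits test functions that vanish near $t=T$, to arbitrary $\phi \in \dis(\bar Q)$ by means of a time cut-off argument. The boundary value $\intom u^\eps(x,T)\phi(x,T)\,dx$ should then appear naturally, and the continuity statement $u^\eps \in \mathcal{C}(0,T;L^2(\Omega))$ from Theorem~\ref{thm:existence-reg} is exactly what will let me identify its limit.

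Given $\phi \in \dis(\bar Q)$, I would fix $\delta>0$ small and introduce a non-increasing Lipschitz cut-off $\chi_\delta:[0,T]\to [0,1]$ with $\chi_\delta \equiv 1$ on $[0,T-\delta]$, $\chi_\delta(T)=0$, and $\chi_\delta' = -\tfrac{1}{\delta}$ on $[T-\delta,T]$ (mollified slightly in time if smoothness is needed). Then $\chi_\delta \phi \in \dis([0,T)\times \bar \Omega)$ is a legitimate test function in \eqref{eq:reg-weak}. Using $(\chi_\delta \phi)_t = \chi_\delta'\phi + \chi_\delta \phi_t$ and $(\chi_\delta\phi)_x = \chi_\delta \phi_x$, that identity becomes
\[
\iint_Q u^\eps \chi_\delta' \phi \, dt\,dx + \iint_Q \chi_\delta \bigl[u^\eps \phi_t + f_\eps(u^\eps)[u^\eps_{xx}-I(u^\eps)]_x \phi_x\bigr]\,dt\,dx + \intom u_0(x)\phi(x,0)\,dx = 0.
\]

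It then remains to send $\delta \to 0$. The second integral converges by dominated convergence to $\iint_Q u^\eps \phi_t + f_\eps(u^\eps)[u^\eps_{xx}-I(u^\eps)]_x \phi_x\,dt\,dx$: the integrand lies in $L^1(Q)$ because $u^\eps \in L^\infty(0,T;H^1(\Omega))$, $f_\eps(u^\eps)$ is bounded by $M$, and $[u^\eps_{xx}-I(u^\eps)]_x \in L^2(Q)$ thanks to the $L^2(0,T;H^3_N(\Omega))$ regularity of $u^\eps$ asserted in Theorem~\ref{thm:existence-reg} (together with $I: H^3_N \to H^2$). The only delicate term is the first one, which by construction of $\chi_\delta$ equals
\[
-\frac{1}{\delta}\int_{T-\delta}^T \intom u^\eps(t,x)\phi(t,x)\,dx\,dt.
\]
This is the Lebesgue mean over $[T-\delta,T]$ of the function $t \mapsto \intom u^\eps(t,x)\phi(t,x)\,dx$, which is continuous on $[0,T]$ since $u^\eps \in \mathcal{C}(0,T;L^2(\Omega))$ and $\phi$ is continuous up to $t=T$. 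Hence this term converges to $-\intom u^\eps(x,T)\phi(x,T)\,dx$. Combining the three limits and rearranging gives exactly \eqref{eq:reg-weak-extended}. The only step that is not bookkeeping is this last continuity-in-time argument, which is the reason I insist on working with the $\mathcal{C}(0,T;L^2(\Omega))$ regularity provided by Theorem~\ref{thm:existence-reg} rather than only with $L^\infty$ in time.
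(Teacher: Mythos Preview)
Your argument is correct and complete: the time cut-off $\chi_\delta$ produces an admissible test function, dominated convergence handles the bulk terms, and the continuity $u^\eps \in \mathcal C(0,T;L^2(\Omega))$ identifies the limit of the averaged boundary layer as $-\intom u^\eps(T)\phi(T)$.

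The paper does not spell out a proof either; it only remarks that the lemma is classical and sketches an approach via \emph{decentered time mollifiers}: one extends $\phi$ by zero past $t=T$ and convolves in time with $\rho_{\alpha,\delta}$ (with $\delta<0$) to obtain a smooth test function vanishing near $t=T$, then lets $\alpha\to 0$. Your cut-off argument and the paper's mollifier argument are two standard variants of the same mechanism---both localize away from $t=T$ and recover the boundary term in the limit---so the difference is purely cosmetic. Your version has the small advantage that the appearance of the boundary term as an average of a continuous function is more transparent.
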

The proof of such a lemma is fairly classical. It relies on mollifiers that
are decentered in the time variable. More precisely, one considers a smooth
even function $\rho:\R \to [0,1]$ compactly supported in $[-1,1]$ and such
that $\int \rho =1$. Then for $\alpha >0$ and $\delta \in \R$, one can
define 
$$
\rho_{\alpha,\delta} (t) = \tau_\delta \rho_\alpha (t) = \alpha \rho \left(
  \frac{t-\delta}{\alpha} \right).
$$
If now a function $f$ is defined in $[0,T]$, it can be extended by $0$ to
$\R$; in other words, it can be replaced with $f \un_\Omega$ where
$\un_\Omega (x)=1$ if $x \in \Omega$ and $\un_\Omega (x)=0$ if not; then
the convolution product in $\R$: $(f \un_\Omega) \star
\rho_{\alpha,\delta}$ is a smooth function in $\R$ which vanishes near
$t=0$ (resp. $t=T$) if $\delta >0$ (resp. $\delta <0$).

Consider a smooth function $\rho$ such as in the proof of
Lemma~\ref{lem:1}. Consider $\alpha >0$ and define $\rho_\alpha (x) =
\alpha \rho (\frac{x}\alpha)$. Then consider $\theta
(x,t)=\rho_\alpha (x) \rho_\alpha (t)$. 
\begin{lemma}\label{lem:2}
  Recall that $u^\eps \in L^\infty (Q)$ and $h^\eps = f_\eps (u^\eps) [
  u^\eps_{xx}- I(u^\eps)]_x \in L^2 (Q)$. Then for all  $v \in L^1 (Q)$,
\begin{equation}
\label{eq:1}  \iint_Q u^\eps (v \star \theta)_t = \iint_Q
  (u^\eps \star \theta)_t v
\end{equation}
where $f \star g$ means $ (f \un_Q) \star (g \un_Q)$. 
\end{lemma}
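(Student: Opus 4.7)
The identity \eqref{eq:1} is essentially Fubini's theorem combined with the elementary symmetry properties of the mollifier $\theta$. My first step would be to observe that because $\theta(x,t)=\rho_\alpha(x)\rho_\alpha(t)$ is smooth and compactly supported on $\R^2$, the convolutions $v\star\theta$ and $u^\eps\star\theta$ are smooth and differentiation commutes with convolution: $(v\star\theta)_t = v\star\theta_t$ and $(u^\eps\star\theta)_t = u^\eps\star\theta_t$. Here $\theta_t(x,t)=\rho_\alpha(x)\rho_\alpha'(t)$ is smooth, bounded, and compactly supported; since $\rho$ is even, $\rho'$ is odd, so $\theta_t$ is even in the space variable and odd in the time variable. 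This parity is the whole point of the symmetric choice of mollifier.

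The plan is then to write both sides of \eqref{eq:1} as iterated integrals of a single triple integrand and apply Fubini. Setting $z=(x,t)$, $w=(y,s)$ and extending $u^\eps$ and $v$ by zero outside $Q$,
\[
\iint_Q u^\eps\,(v\star\theta)_t\, dz \;=\; \iiint u^\eps(z)\, v(w)\, \theta_t(z-w)\, dw\, dz .
\]
Absolute integrability of this triple integral is immediate: $u^\eps\in L^\infty(\R^2)$, $v\in L^1(\R^2)$, and $\theta_t$ is bounded with compact support, so
\[
\iiint |u^\eps(z)||v(w)||\theta_t(z-w)|\, dw\, dz \;\le\; \|u^\eps\|_{L^\infty}\|v\|_{L^1}\|\theta_t\|_{L^\infty}\,|\mathrm{supp}\,\theta_t| < \infty .
\]
Fubini's theorem therefore allows us to swap the order of integration, producing $\int v(w)\bigl[\int u^\eps(z)\,\theta_t(z-w)\,dz\bigr]\,dw$, and the parity of $\theta$ (evenness in both variables) together with the chain rule identifies the inner integral with $(u^\eps\star\theta_t)(w)=(u^\eps\star\theta)_t(w)$, yielding the right-hand side of \eqref{eq:1}.

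The only genuine work here is bookkeeping: choosing a convention for $\star$ and carefully matching the parity of $\rho$ with that of $\rho'$ so that signs cancel and the identity closes as written. There is no analytic obstacle; the decentering parameter $\delta$ (which will be used elsewhere to push the mollified test function away from $t=0$ and $t=T$) plays no role in the identity itself, and the low regularity of $u^\eps$ (only $L^\infty$ in $t$) is absorbed entirely by the smoothness of $\theta$.
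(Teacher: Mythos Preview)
The paper does not actually prove this lemma; it is stated as a routine identity and used immediately. Your approach --- write both sides as a common triple integral, justify the swap of integration order via Fubini using $u^\eps\in L^\infty$, $v\in L^1$ and $\theta_t\in L^\infty$ with compact support, and then match the inner integral to a convolution using the parity of the mollifier --- is exactly the standard argument one would give.

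One sign needs more care, though. You correctly note that $\theta_t(x,t)=\rho_\alpha(x)\rho_\alpha'(t)$ is \emph{odd} in $t$ (hence odd as a function on $\R^2$, since $\rho_\alpha$ is even in $x$). But in your last step you invoke the evenness of $\theta$ ``together with the chain rule'' to identify the inner integral $\int u^\eps(z)\,\theta_t(z-w)\,dz$ with $(u^\eps\star\theta_t)(w)$. That identification would require evenness of $\theta_t$, not of $\theta$: since $\theta_t$ is odd, $\theta_t(z-w)=-\theta_t(w-z)$ and therefore
\[
\int u^\eps(z)\,\theta_t(z-w)\,dz \;=\; -(u^\eps\star\theta_t)(w)\;=\;-(u^\eps\star\theta)_t(w),
\]
so a careful execution of your own computation yields $\iint_Q u^\eps(v\star\theta)_t = -\iint_Q (u^\eps\star\theta)_t\,v$. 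With the standard convolution convention, then, the identity \eqref{eq:1} as printed appears to carry a sign (a one-variable check with, say, Gaussians confirms this); either a slightly different convention is intended or there is a typo. This does not affect the architecture of your proof, which is sound, but you should not paper over the sign with ``evenness of $\theta$'': it is precisely the oddness of $\theta_t$ that you flagged at the outset that governs what happens at this step.
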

We next apply Lemma~\ref{lem:1} with $\phi = v\star \theta$ where $v$ is
chosen to be successively $u^\eps_{xx} \star \theta$, $I(u^\eps \star
\theta)$ and $G'_\eps (u^\eps \star \theta)$. After direct computations, we
can let $\alpha \to 0$ and get the desired estimates.

\def\cprime{$'$}

\end{document}